\crefname{hypothesis}{Hypothesis}{Hypotheses}
\title{Piecewise linear maps with heterogeneous chaos
}
\author{Yoshitaka Saiki\thanks{Graduate School of Business Administration, Hitotsubashi University, Tokyo 186-8601, Japan
(\email{yoshi.saiki@r.hit-u.ac.jp})}
\and Hiroki Takahasi\thanks{Keio Institute of Pure and Applied Sciences (KiPAS), Department of Mathematics, Keio University, Yokohama 223-8522, Japan (\email{hiroki@math.keio.ac.jp})}
\and James A Yorke\thanks{
Institute for Physical Science and Technology, Department of Mathematics and Department of Physics, University of Maryland  College Park, MD 20742, USA (\email{yorke@umd.edu})}
}
\newif\ifincludeJUNK
\newif\ifincludeXX
\newif\ifincludeOLD
\newcommand{\BF}{\boldmath}
\newcommand{\UNBF}{\unboldmath}
\newcommand{\allblack}{\color{black}{}}
\newcommand{\ie}{\textit{i.e.}}
\newcommand{\N}{{\mathbb{N}}}%
\newcommand{\mm}{\mathbb}
\newcommand{\A}{A}
\newcommand{\B}{B}
\newcommand{\C}{C}
\newcommand{\D}{D}
\newcommand{\mbf}{\mathbf} 
\newcommand{\avphi}{\langle\phi\rangle}
\newcommand{\cube}{{[0,1]^3}}
\newcommand{\HH}{H}
\definecolor{new}{rgb}{.3,.9,0}
\newcounter{arxiv}
\begin{document}

\maketitle

\begin{abstract}
Chaotic dynamics 
can be quite heterogeneous in the sense that in some regions the dynamics are unstable in more directions than in other regions.
When trajectories wander between these regions, the dynamics is complicated.
We say a chaotic invariant set is heterogeneous when arbitrarily close to each point of the set there are different periodic points with different numbers of unstable dimensions. We call such dynamics heterogeneous chaos (or hetero-chaos). While we believe it is common for 
physical systems to be hetero-chaotic, few explicit examples have been proved to be hetero-chaotic. Here we present two explicit dynamical systems that are particularly simple and tractable with computer. 
It will give more intuition as to how complex even simple systems can be. 
Our maps have one dense set of periodic points whose orbits are 1D unstable and another dense set of periodic points whose orbits are 2D unstable. Moreover, they are ergodic relative to the Lebesgue measure.
\end{abstract}
\begin{keywords}
 baker map, 
 non-hyperbolic system, 
 periodic orbit, ergodicity
\end{keywords}
\begin{AMS}
37A25, 
37D30
\end{AMS}

\section{Introduction}
Picture a chaotic trajectory in
a heterogeneous attractor:
there are a variety of regions $R_k$ 
where the local dynamics are expanding in $k$ directions and these occur 
for a variety of values of $k$. This picture may be representative of most high-dimensional chaotic attractors.
It suggests the existence of periodic orbits with different unstable dimensions.
Different periodic orbits can have different unstable dimensions, 
each of which type can be dense.
We call such maps or dynamics ``hetero-chaotic'', a term defined more precisely below.
Hetero-chaos may be more widespread than currently acknowledged.

Due to the difficulty in detecting such periodic orbits practically,
various approaches have been considered.
For example, 
the number of positive finite-time Lyapunov exponents were used 
to
detect the coexistence of periodic orbits of different unstable
dimensions in low-dimensional maps~
\cite{dawson_1996,dawson_1994,kostelich_1997}.
Periodic orbits of different unstable dimensions were used to discuss the occurrence of 
on-off intermittency~\cite{pereira_2007}.
For a continuous-time
atmospheric model, Gritsun~\cite{gritsun_2008,gritsun_2013}
found many periodic orbits with a wide
variety of unstable dimensions, all coexisting in the same system.
For the high dimensional Lorenz equation~\cite{lorenz_1996}, periodic orbits with different unstable dimensions were numerically detected in a single chaotic attractor~\cite{saiki_2018c}. 
These and other numerical results
have created among scientists, including applied mathematicians, a strong need for a simple model which qualitatively describes hetero-chaos.

Theoretical results on hetero-chaos came earlier than numerical ones. The existence of diffeomorphisms in which periodic orbits of different unstable dimensions are mixed was already known since the 60s 
\cite{abraham_1970,bonatti_1996,mane_1978,shub_1971,simon_1972}. These theoretical results are aimed at obtaining a global picture on dynamics of ``most'' diffeomorphisms of a compact Riemannian manifold,
and not aimed at understanding specific physical systems. To our knowledge, there is no rigorous proof that in any specific physical system hetero-chaos can actually occur.

Several simple models have been introduced and investigated which are relevant to hetero-chaos.
For a 2D coupled tent map \cite{pikovsky_1991}, Glendinning \cite{glendinning_2001} proved
the density of repellers in the attractor, while the density of saddles in the attractor is not known. 
The result of D\'iaz et al.~\cite{diaz_2014} 
suggests the existence of a hetero-chaos
in a certain H\'enon-like family.
Kostelich et al.~\cite{kostelich_1997} introduced a skew-product type analytic two dimensional map.
For this map with a certain parameter setting, Das and Yorke~\cite{das_2017a} proved the existence of a hetero-chaotic attractor
with a quasi-periodic orbit. In  \cite{saiki_2018c}, hetero-chaotic piecewise linear maps were announced without proofs. 

The goal of this paper is to verify the existence of  hetero-chaos in piecewise linear maps
as in \cite{saiki_2018c}.
We divide a space (a square $[0,1]^2$ or cube $[0,1]^3$) into several homogeneous regions with different expansion properties. 
Figure~\ref{fig:baker}(a) has only one region $R_1$ and Fig.~\ref{fig:baker}(b) has only one region $R_2$, while Figs.~\ref{fig:2D_HC_baker} and
\ref {fig:3H} have regions $R_1$ and $R_2$ where the dynamics are one- and two-dimensionally expanding, respectively.
To emphasize the meandering nature of the dynamics, we focus on periodic orbits that spend different amounts of time in the regions $R_k$ 
for different values of $k$. 
Periodic orbits are structures that can be found  numerically. 
The variability of their unstable dimensions 
is a signature of the heterogeneity of the system.

\begin{figure}
            \subfloat[]{\includegraphics[width=.49\textwidth]{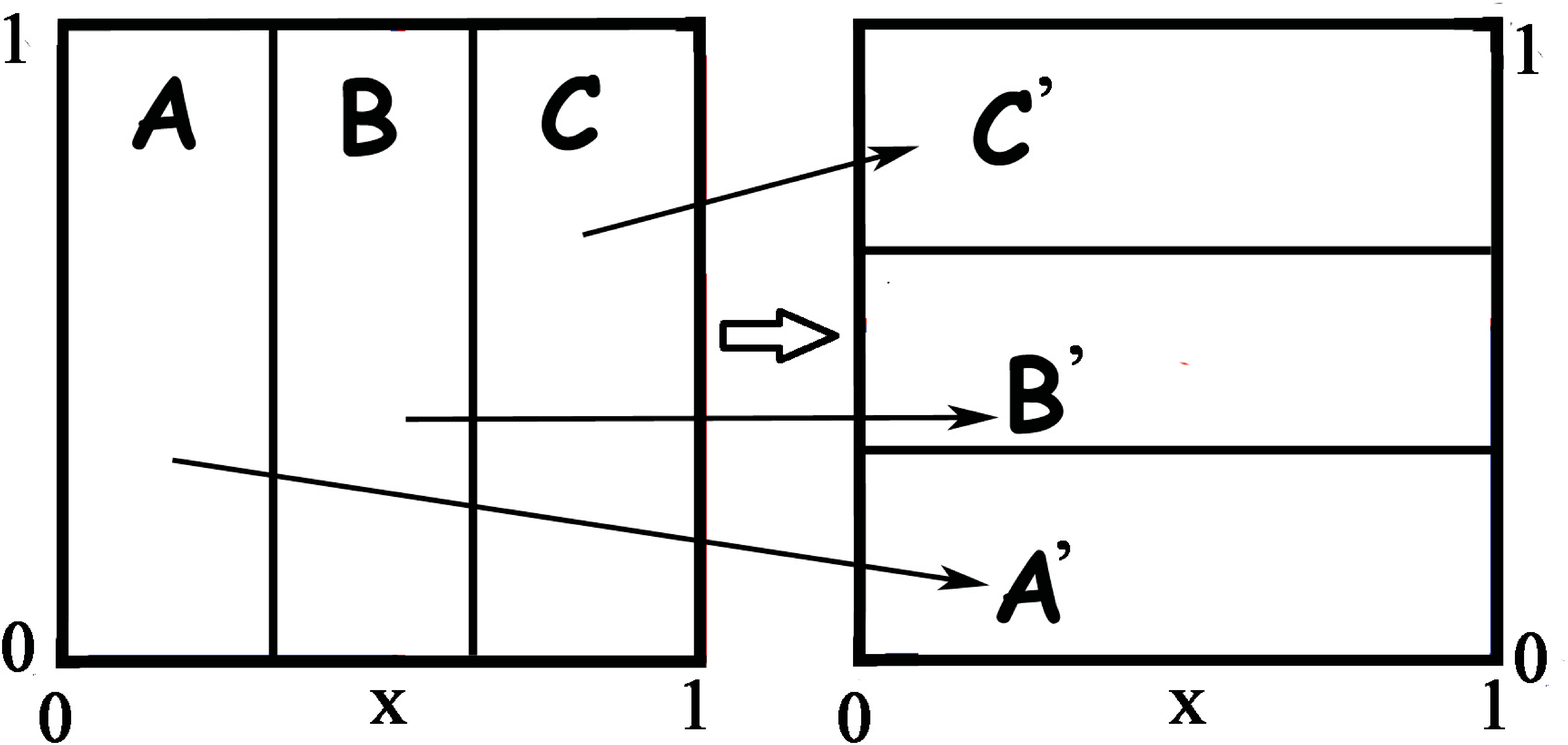}}
      \subfloat[]{\includegraphics[width=.49\textwidth]{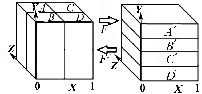}
      }
\caption{{\bf The homogeneous chaotic baker maps ``2D-baker'' and ``3D-baker''.}
The images of rectangles or boxes are denoted by primes~$\mbf {^\prime}$ so for example
$\A$ maps to $\A'$. 
{\bf (a):}
The 2D-baker map is defined by splitting 
${[0,1]^2}$ into 
three equal-sized vertical rectangles.
The square ${[0,1]^2}$ is mapped to itself, by
$x\mapsto \tau(x)$ in Eq.~\eqref{tau}.
Each vertical rectangle on the left maps to a different horizontal rectangle on the right, 
stretched horizontally and contracted vertically. 
{\bf (b):}
We define a 3D-baker map by slicing $\cube$ into four equal-sized breadbox-shaped boxes as shown.
The cube $\cube$ is mapped to itself, by
$x\mapsto 2x \bmod1$ and $z\mapsto 2z \bmod1$. 
}
\label{fig:baker}
\end{figure}

\subsection{ Hetero-chaos}\label{sec:defheterochaos}
Let $F\colon M\to M$ be a piecewise differentiable map.
 We say
a point $p\in M$ is a 
{\bf periodic point} of period $n$ 
if $F^n(p) = p.$ Let $p$ be a periodic point of period $n$
and assume that $F^n$ is differentiable at $p$.
The number of eigenvalues of the Jacobian matrix $DF^n(p)$ outside the unit disk $\{z\in\mathbb C\colon|z|\leq1\}$ counted with multiplicity is called the {\bf unstable dimension} of $p$. 
 If the unstable dimension of $p$ is equal to $k$, we say $p$ is {\bf $k$-unstable}.
We say $p$ is {\bf hyperbolic} if 
no eigenvalue of $DF^n(p)$ lies on the unit circle.

We say
an invariant  set $\Lambda\subset M$ (\ie, $F(\Lambda)=\Lambda$) is {\bf chaotic} if
\begin{itemize}
    \item[(i)]$\Lambda$ is an uncountable closed set.
\item[(ii)]the set of hyperbolic periodic points is dense in $\Lambda$, and 
\item[(iii)]$\Lambda$ has a dense trajectory, \ie, there exists $x_0\in \Lambda$ such that
the set\\
$(F^n(x_0))_{n\in \N}$ is dense in $\Lambda$, 
where $\N$ denotes the set of non-negative integers.
\end{itemize}
We say $\Lambda$ is {\bf hetero-chaotic} if (i), (iii) and the following hold:
\begin{itemize}
    \item[(ii$^\prime$)] for at least two values of $k$, the set of 
 $k$-unstable hyperbolic
 periodic points is dense in $\Lambda$. 
\end{itemize}
We say $F$ is hetero-chaotic on $\Lambda$. 
In the case $\Lambda=M$ we simply say $F$ is hetero-chaotic,
or has heterogeneous chaos, or more briefly {\bf hetero-chaos} or even {\bf HC}.

\begin{figure}
\centering
 \includegraphics[width=.65\textwidth]{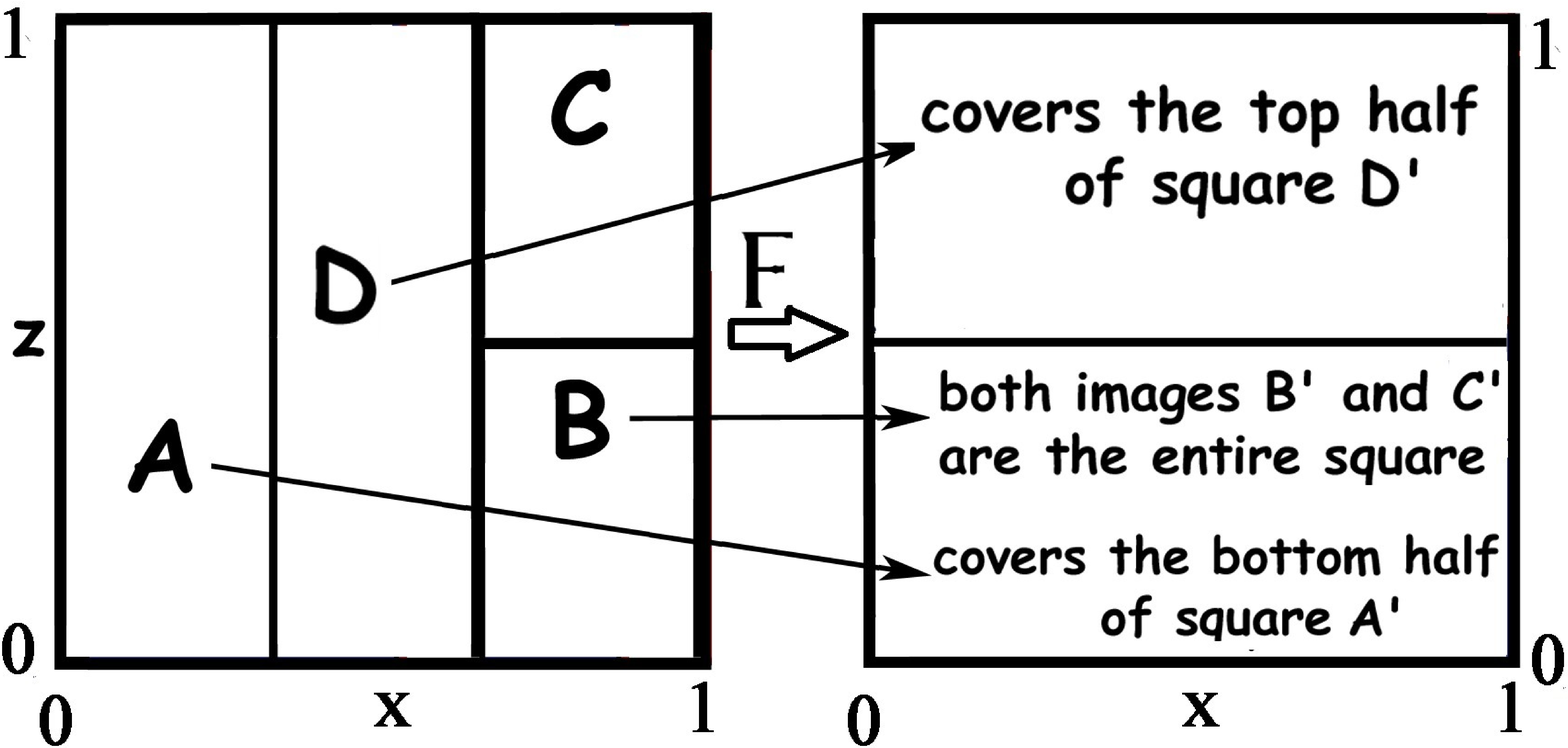}
\caption{
 {\bf 
The 2D hetero-chaos baker map}. 
The square $[0,1]^2$ is divided into four rectangles $A = \left[0,\frac{1}{3}\right)\times\left[0,1\right],$
$B =\left [\frac{2}{3},1\right]\times\left[0,\frac{1}{2}\right),$
$C = \left[\frac{2}{3},1\right]\times\left[\frac{1}{2},1\right],$
$D =\left[\frac{1}{3},\frac{2}{3}\right)
\times\left[0,1\right].$
The 2D-HC map $F$ expands each rectangle horizontally to full width as shown. The region $R_1 = A\cup D$ is contracted vertically. The region 
$R_2= B\cup C$ is expanded in both coordinates so that each of the images of $B$ and $C$ covers $[0,1]^2$. 
Hence $R_1$ and $R_2$ are regions of one- and two-dimensional instability. While the maps in Fig.~\ref{fig:baker} are one-to-one almost everywhere, this map is not.}
\label{fig:2D_HC_baker}
\end{figure}

\subsection{Baker maps}
We begin with 
the well-known ``baker map'' (Fig.~\ref{fig:baker}(a)). 
We refer to it as the 2D-baker map. 
It was introduced by Seidel \cite{seidel_1933}, where the square is divided into $q$ equal vertical strips. Seidel used $q=10$. We use $q=3$, 
though $q=2$ is most common in the literature.
Each strip is mapped by a map $F$ to a horizontal strip by squeezing it vertically by the factor $\frac{1}{3}$ and stretching it horizontally by the factor $3$. The resulting horizontal strips are laid out covering the square $[0,1]^2$.
There, all periodic points are $1$-unstable.
We show a three-dimensional version that we call a ``3D-baker map'' in Fig.~\ref{fig:baker}(b). 
 All periodic points of the 3D-baker map are 2-unstable. 
The baker maps here are area or volume preserving and are one-to-one almost everywhere. 
When all periodic points have the same unstable dimension, we call the chaos {\bf homogeneous}. 
\subsection{3D hetero-chaos baker map}
We modify the two baker maps in Fig.~\ref{fig:baker}  to hetero-chaotic maps, which we 
 call the ``2D-HC'' and ``3D-HC'' maps respectively, as
in Figs.~\ref{fig:2D_HC_baker} and ~\ref{fig:3H}. 
We consider these as prototypes for understanding attractors with far higher dimensions.

\begin{figure}
\centering
\subfloat[]{\includegraphics[height=0.33\textwidth,width=0.9\textwidth]{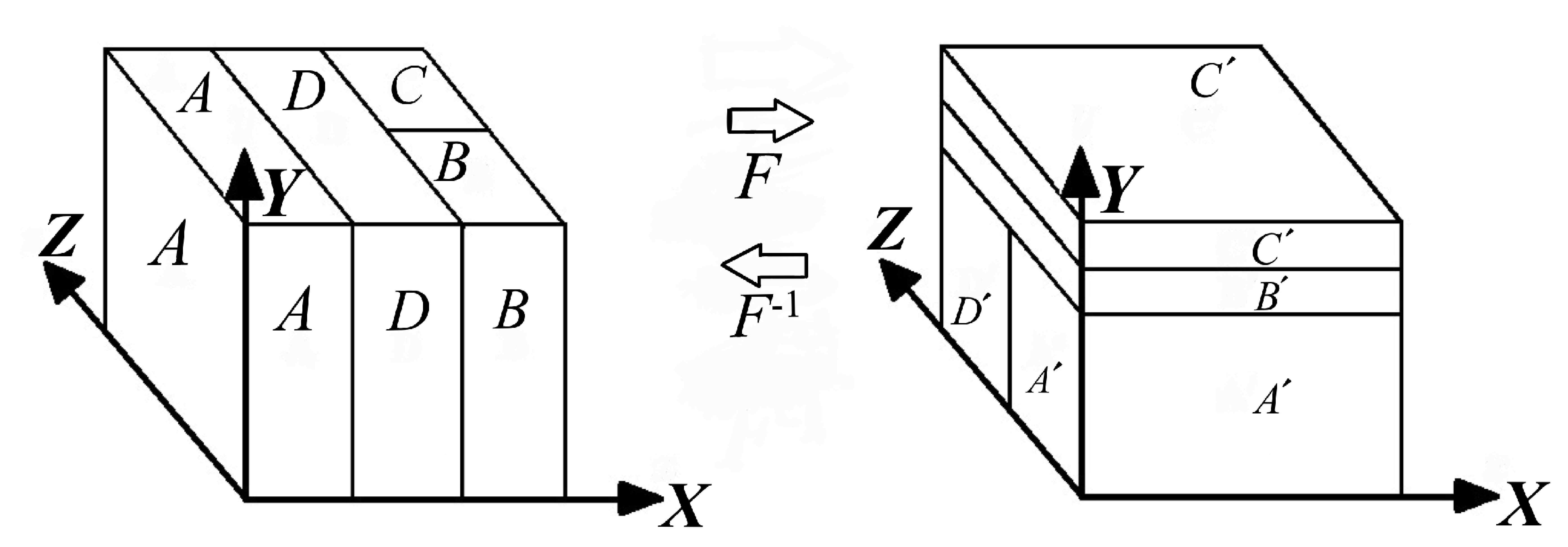}
}\\
\begin{minipage}{.5\linewidth}
\centering
\subfloat[]{\includegraphics[height=0.6\textwidth,width=0.65\textwidth]{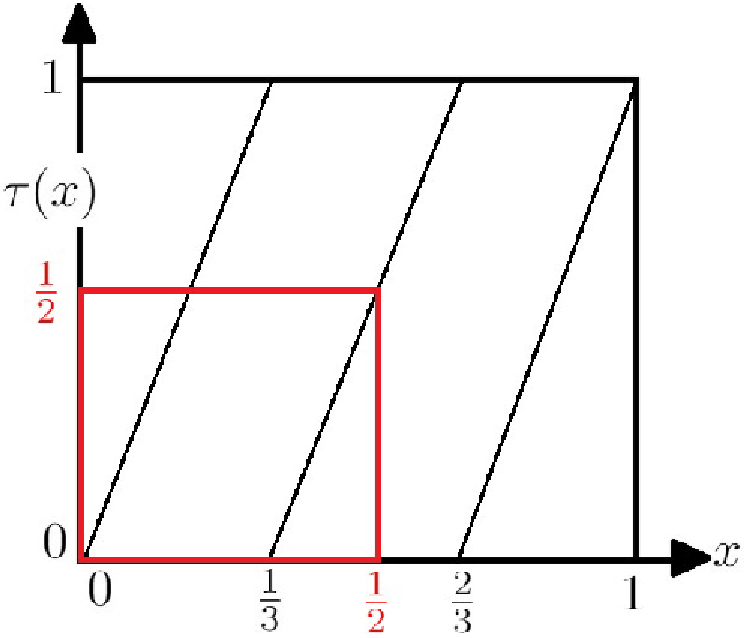}}
\end{minipage}%
\begin{minipage}{.5\linewidth}
\centering
\subfloat[]{\includegraphics[height=0.63\textwidth,width=0.68\textwidth]{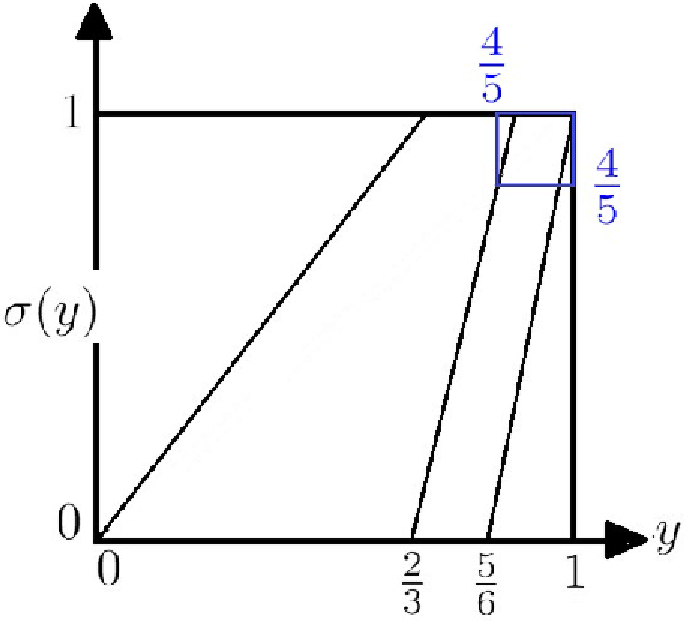}
}
\end{minipage}\par\medskip
\caption{
 {\bf 
 (a) The 3D hetero-chaos baker map.}
 Here the $XZ$ plane plays the role of $XZ$ in Fig.~\ref{fig:2D_HC_baker} and the $Y$ coordinate has been added.
Here $\cube$ is partitioned into four regions $\A$, $\B$, $\C$, $\D$ 
and each of them is mapped into a region of the same volume. 
The sets $\A$ and $\D$ each have volume $\frac{1}{3}$ and expand in the $X$ direction and 
contracting in the $Y$ and $Z$ directions.
The sets $\B$ and $\C$ both have volume $\frac{1}{6}$ and expand in $X$ and $Z$ directions and contracting in the $Y$ direction. 
{\bf (b) The graph of $\tau(x)$.}
{\bf (c) The graph of $\sigma(y)$.}
The first branch 
has slope 
$\frac{3}{2}$ while the other two branches have slope 
$6$. 
The red and blue boxes are enlarged in Fig.~\ref{fig:index-maps}. 
}\label{fig:3H}
\end{figure}

The 3D-HC map $F$ is defined as follows. Define $\tau\colon[0,1]\to[0,1]$ by
\begin{equation}
\label{tau}
    \tau(x)=\begin{cases}3x \mod 1&\text{ for }0\leq x<1,\\
    1&\text{ for }x=1.
\end{cases}
\end{equation}
Then
\begin{eqnarray}
  F((x,y,z))=
  \begin{cases}
    \left(\tau(x),~~~~~\frac{2}{3}y,~~~~~~~\frac{1}{2}z\right)&\text{on}~A \\
    \left(\tau(x),\frac{2}{3}+\frac{1}{6}y,~~~~~~~2z\right)&\text{on}~B \\
   \left (\tau(x),\frac{5}{6}+\frac{1}{6}y,-1+2z\right)&\text{on}~C \\
    \left(\tau(x),~~~~~\frac{2}{3}y,~~\frac{1}{2}+\frac{1}{2}z\right)&\text{on}~D,
  \end{cases}
  \label{eq:3D-HC}
\end{eqnarray}
where 
\begin{eqnarray*}
A =& \left[0,\frac{1}{3}\right)\times
\left[0,1\right]\times\left[0,1\right];\\
B =&\left [\frac{2}{3},1\right]\times
\left[0,1\right]\times\left[0,\frac{1}{2}\right);\\
C =& \left[\frac{2}{3},1\right]\times
\left[0,1\right]\times\left[\frac{1}{2},1\right];\\
D =&\left[\frac{1}{3},\frac{2}{3}\right)
\times\left[0,1\right]\times\left[0,1\right].
\end{eqnarray*}
{Note that the map $F$ is differentiable
and one-to-one except 
on the points in the boundaries of $A, B, C, D$ where it is discontinuous and at most three-to-one. With a slight abuse of notation,
we fix a piecewise linear map $F^{-1}$ on $[0,1]^3$
satisfying $F\circ F^{-1}(x,y,z)=(x,y,z)$
on $F([0,1]^3)$.
For $n\geq1$, let $F^{-n}$ denote the $n$-composition of $F^{-1}$.}
The map $F^{-1}$ has many of the features of $F$.
 The $Y$ coordinate $y'$ of $ 
F^{-1}(x,y,z)$ depends only on $y$, 
so we define 
$\sigma\colon[0,1]\to[0,1]$ by $y'=\sigma(y)$. See Figs.~\ref{fig:3H}(c) and \ref{fig:index-maps}(b).

Let $F$ be the 3D-HC map and
 let 
$\pi\colon[0,1]^3\to[0,1]^2$ 
be the projection $\pi(x,y,z)=(x,z)$.
We define the 2D-HC map by $$F_{2\text{D}}(x,z)=\pi(F(x,0,z)).$$ 
Note that $\pi\circ F=F_{2\text{D}} \circ \pi$. We will omit the subscript and write $F$ to denote both the 2D-HC map and the 3D-HC map.

\subsection{Heterogeneous chaos}
Figure~\ref{fig:per} shows parts of sets of hyperbolic periodic points numerically detected from the 2D-HC
map. It suggests that the set of 1-unstable hyperbolic periodic points is 
dense, and the set of 2-unstable hyperbolic periodic points is also dense.
We prove this as a main result of this paper.
 
\allblack
\begin{theorem}
\label{thm:main}
The 2D-HC and 3D-HC maps are hetero-chaotic.
\end{theorem}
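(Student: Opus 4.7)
The hetero-chaos conditions decompose into (i) $A = [0,1]^2$ (resp.\ $[0,1]^3$) is uncountable and closed (immediate), (iii) the existence of a dense orbit, and (ii$^\prime$) density of $k$-unstable periodic points for at least two values of $k$. Since the Jacobian is diagonal with factor $3$ in $x$, factor $1/2$ (on $\R_1$) or $2$ (on $\R_2$) in $z$, and (only for 3D-HC) an always-contracting $y$-factor, the relevant values are $k=1$ and $k=2$. The 3D-HC map projects to the 2D-HC map via $(x,y,z)\mapsto(x,z)$, and because $y$ contracts in every region the unstable dimension of a 3D periodic orbit equals that of its 2D projection; I would therefore carry out the main construction in 2D and reduce the 3D case to it.

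The core tool is the symbolic coding by the alphabet $\{\A,\B,\C,\D\}$: the $x$-coordinate is the base-$3$ expansion with $\A\leftrightarrow 0$, $\D\leftrightarrow 1$, $\B,\C\leftrightarrow 2$ (so agreement in the first $K$ symbols forces $|x-x'|\le 3^{-K}$), while the binary expansion of $z$ evolves as a stack: $\A$ and $\D$ push the bit $0$ or $1$, while $\B$ and $\C$ pop the top bit and require it to be $0$ or $1$ respectively. For any periodic symbol sequence of period $n$ with letter counts $(a,b,c,d)$, the one-period $z$-composition is affine with slope $2^{(b+c)-(a+d)}$; when this slope is $\ne 1$ it has a unique fixed point $z^*$, and the resulting orbit is $1$-unstable if $a+d>b+c$ and $2$-unstable if $a+d<b+c$. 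To prove density of $1$-unstable periodic orbits near a target $p_* = (x_*, z_*)$ with precision $\epsilon$, I would fix $K$ with $3^{-K}<\epsilon/3$, take the first $K$ symbols $s_0^*\ldots s_{K-1}^*$ of the forward orbit of $p_*$, and extend them into a period-$n$ cycle by appending symbols chosen to make $a+d>b+c$ strictly and to place $z^*$ within $\epsilon/3$ of $z_*$. The cylinder $[s_0^* \ldots s_{K-1}^*]$ has $x$-diameter $3^{-K}$ by construction, and its $z$-diameter can be made small either by taking $K$ large (when the target's orbit has enough $R$-visits that reveal bits of $z_*$) or by inserting extra push-blocks into the cycle to raise the effective stack depth; the $2$-unstable case is symmetric, with a block heavy in $\B,\C$.

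The main obstacle is \emph{admissibility} of the symbolic sequences used in the $2$-unstable construction: each pop ($\B$ or $\C$) demands a specific value for the top stack bit, and these bits are determined by prior pushes and ultimately by $z^*$ itself via the fixed-point equation, producing a self-referential condition. The resolution is to package each long pop-block with an immediately preceding push-block whose pushes deposit exactly the bits that the subsequent pops will consume in LIFO order; any residual pops beyond those paired with pushes are absorbed from the leading binary digits of $z^*$, and because we control the pop pattern we conversely pin down those leading bits, so the fixed-point equation closes up consistently. Topological mixing, and hence a dense orbit in (iii), follows because any nonempty open rectangle contains a sub-rectangle fully inside $\B$ or $\C$, whose iterate under $F$ is stretched in both coordinates and eventually covers $[0,1]^2$. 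Finally, for 3D-HC, the inverse $F^{-1}$ acts on $y$ as a full $3$-branch expanding piecewise-linear map $\sigma$ independent of $(x,z)$; density of $\sigma$-periodic points in $[0,1]$ combined with the $(x,z)$-construction above --- carried out with period large enough that the first $K$ symbols of the cycle match those of $p_*$'s forward orbit (securing $(x,z)$-closeness) and the last $K$ symbols match those of $p_*$'s backward orbit under $F^{-1}$ (securing $y$-closeness) --- yields density of 3D periodic orbits of the desired unstable dimension near any $(x_*,y_*,z_*)$, completing the proof.
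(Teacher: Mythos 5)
Your periodic-point construction is a genuinely different route from the paper's and is viable. The paper never manipulates symbol sequences directly: it builds a geometric $(j,k)$-brick around a \emph{biased} point $p$ (a box whose image $B^{-j}$ under $F^{-j}$ is a Y breadbox and whose image $B^{k}$ under $F^{k}$ is an XZ pizzabox, every intermediate image lying in a single symbol set), and extracts the periodic point from a coordinatewise fixed-point argument (Prop.~\ref{prop:periodicpoint}). Admissibility is automatic there because the brick is wrapped around an actual orbit segment; the delicate part is instead showing the brick is \emph{interior}, which is the endpoint bookkeeping of Prop.~\ref{prop:InteriorBrick}. You confront admissibility head-on via the push/pop coding of $z$, and your key device --- a pop-block that reverses an immediately preceding push-block is unconditionally admissible --- is sound and correctly targets the real difficulty (the paper's Appendix B makes the same point: admissibility can depend on arbitrarily many prior symbols, so this is not a subshift of finite type). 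Your handling of the $y$-coordinate of a 3D cycle through the tail of the itinerary replaces the paper's trick of re-running the whole brick machinery for $F^{-1}$ (Cor.~\ref{cor:1D_unstable_points}); both work. What the paper's route buys is that the same brick apparatus then feeds directly into the dense-orbit construction.

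The genuine gap is in your dense-orbit argument. First, the claim that every nonempty open rectangle contains a sub-rectangle fully inside $\B$ or $\C$ is false (take $U_X\subset[0,\tfrac13)$); you must first iterate until some branch of the image covers $x\in[\tfrac23,1]$, and after one application of $F$ the image can fall back into $\R_1$ where $z$ contracts, so you must repeatedly re-select the sub-piece over $\{x\ge\tfrac23\}$ --- this selection-and-nesting is exactly the content of the paper's Two-Brick Lemma~\ref{prop:2brick} and Prop.~\ref{prop:manybricks}, and it cannot be compressed into ``one iterate is stretched in both coordinates.'' Second, $F$ is discontinuous, so ``topological mixing, hence a dense orbit'' is not automatic: $F^{-n}(V)$ need not be open, and the standard Birkhoff/Baire argument does not apply off the shelf; one must either restrict to cylinders avoiding the discontinuity set or, as the paper does, explicitly build a nested sequence of breadboxes whose (nonempty, by compactness) intersection consists of points visiting a prescribed countable dense family of bricks. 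Third, and most importantly, the 3D dense orbit is not addressed at all: your covering argument lives in $[0,1]^2$, and a dense orbit of the 2D-HC map does not lift to a dense orbit of the 3D-HC map (the projection goes the wrong way, and the $y$-coordinate of any lift is uncontrolled by the 2D data). Since $d_Y<1$ on every symbol set, $F^n(U)$ never covers $\cube$; the correct statement is that it eventually contains an XZ pizzabox, which meets every Y breadbox, and turning that into a single dense trajectory in $\cube$ is precisely the nested breadbox/pizzabox construction you would still need to supply.
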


 A well-known argument for showing the density of periodic points relies on the use of a Markov partition \cite{kitchens_1998}. However, hetero-chaotic systems never admit Markov partitions.
The lack of a Markov partition for our maps is due to the fact that the $Z$ direction is neither uniformly contracting or expanding. For example, the 2D-HC map has a segment
 $\{\frac{1}{4}\}\times(0,1)$ of periodic points of period $2$. 
 The corresponding segment of periodic points for the 3D-HC map is $\{(\frac{1}{4},\frac{15}{16})\}\times(0,1)$. 
No subshift of finite type on finitely many  symbols can model our maps since such a subshift has only countably many periodic points.

In Secs.~\ref{sec:periodic-points} and \ref{sec:ergodicity} we prove Thm.~\ref{thm:main}. Our idea is to recover geometric features of a Markov partition which lead to the heterochaos.
A key ingredient is {\it a brick} that we introduce in 
Sec.~\ref{sec:periodic-points}.
 We construct special bricks carefully analyzing the expansion rates in the $Z$ direction, and 
 establish one part of Thm.~\ref{thm:main} on periodic points of the 3D-HC map.
This geometric construction 
can be adapted to showing the existence of a dense trajectory in $\cube$, (Thm.~\ref{thm:transitivity}), thereby 
 establishing the heterochaos for the 3D-HC map.
  Since the projection $\pi$ maps each dense trajectory for the 3D-HC map to that for the 2D-HC map, and each periodic point for the 3D-HC map to that for the 2D-HC map preserving the unstable directions, the heterochaos for the 2D-HC map follows. 
 
 \begin{figure}
\centering
\begin{minipage}{.5\linewidth}
\centering
\subfloat[]{\includegraphics[width=.6\textwidth]{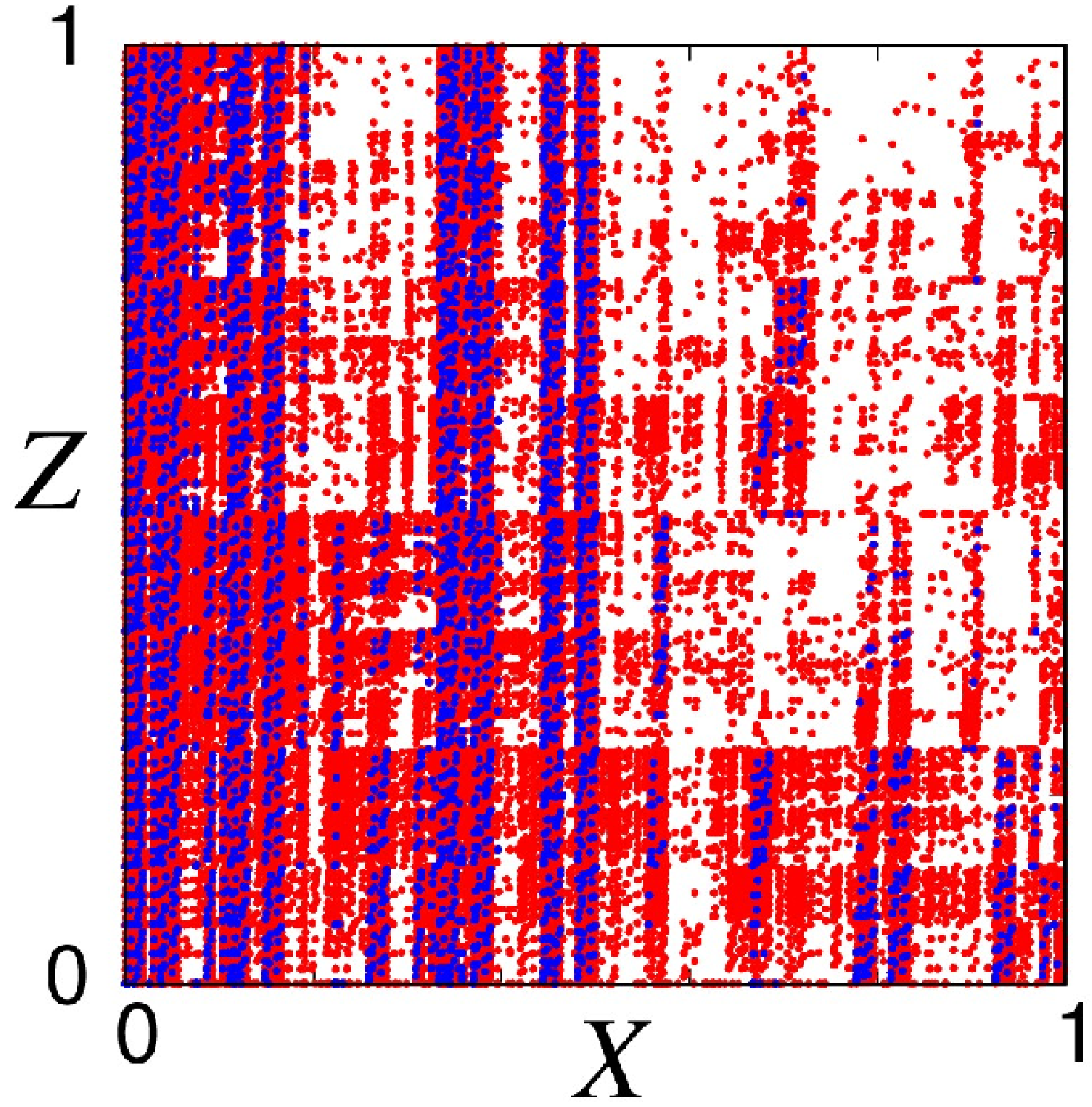}}
\end{minipage}%
\begin{minipage}{.5\linewidth}
\centering
\subfloat[]{\includegraphics[width=.6\textwidth]{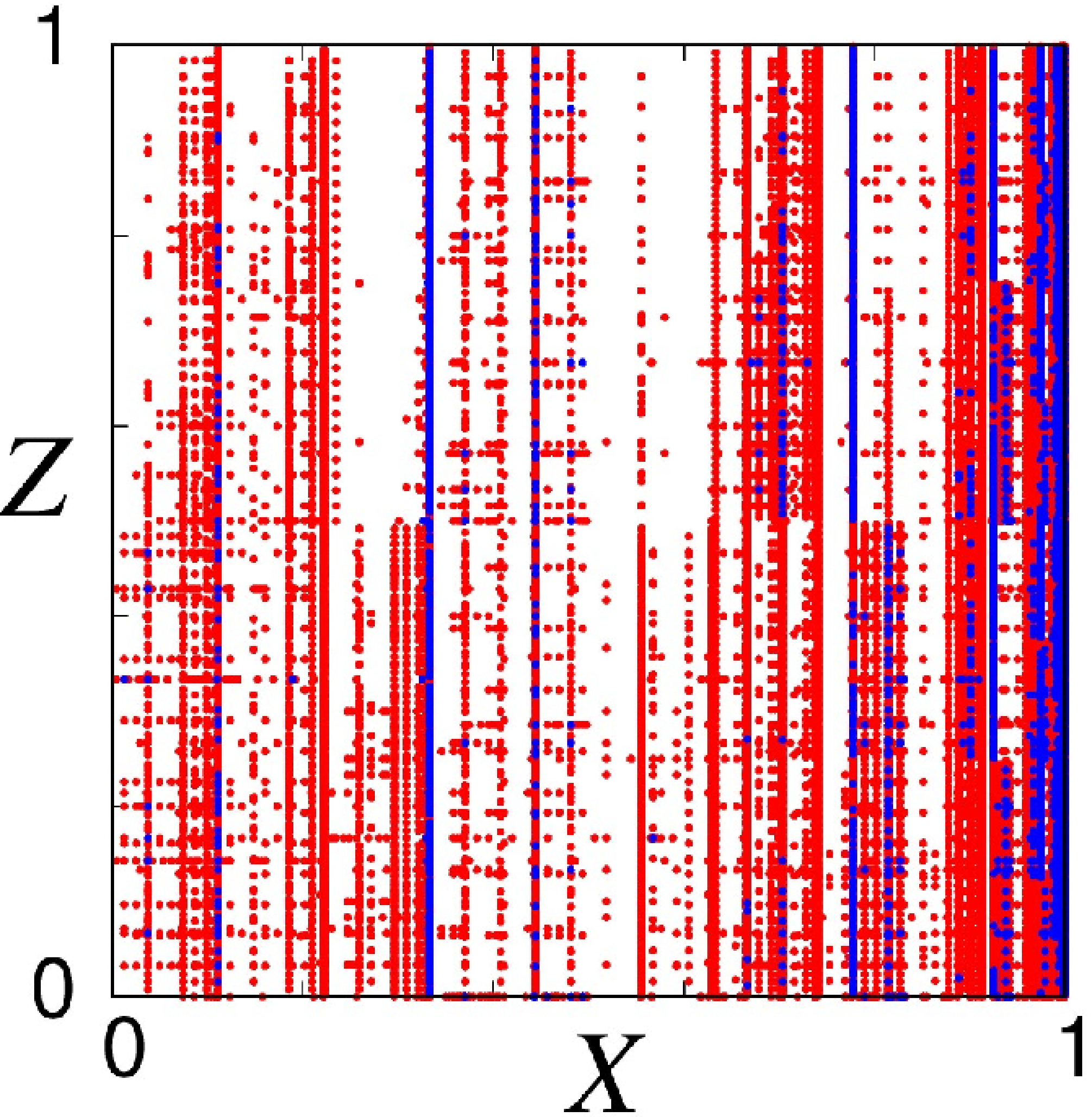}}
\end{minipage}\par\medskip
\caption{{\bf Sets of {$k$-unstable hyperbolic} 
periodic points of the 2D-HC map.} {\bf (a):} 
{$k=1$} (periods 2-10~(blue), periods 11-13~(red))
 {\bf (b):} 
{$k=2$}
(periods 2-10~(blue), periods {11}-13~(red)). 
}\label{fig:per}
\end{figure}
\subsection{Ergodicity}
The 2D-HC map preserves area and the 3D-HC map preserves volume.
 It is natural to ask if they are ergodic. We say $F$ is {\bf ergodic}
 if for each $\mathbb R$-valued continuous function $\phi$
 on the space,
 the trajectory average
\begin{equation}\label{average}
\avphi(p)= \lim_{n\to\infty} \frac{1}{n}\sum_{i=0}^{n-1} \phi(F^i(p))
\end{equation}
 exists and coincides for almost every initial point $p$.
 
 \begin{theorem}[Ergodicity]\label{thm:ergodic0}
 The 2D-HC and 3D-HC maps are ergodic.
\end{theorem}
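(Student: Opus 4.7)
The plan is to realize each map as a measurable factor of a mixing topological Markov shift, and transfer ergodicity across. I begin by checking that $\mathcal{P}=\{\A,\B,\C,\D\}$ is a Markov partition for $F$: in the 3D case, the images
\[
F(\A)\cup F(\D)=[0,1]\times[0,\tfrac23]\times[0,1],\qquad F(\B)\cup F(\C)=[0,1]\times[\tfrac23,1]\times[0,1],
\]
together with the analogous rectangles in the 2D case, give the admissible transitions $\A\to\{\A,\B,\D\}$, $\D\to\{\A,\C,\D\}$, $\B,\C\to\{\A,\B,\C,\D\}$. The associated subshift of finite type $\Sigma\subset\{A,B,C,D\}^{\mathbb Z}$ (respectively $\Sigma^+$ for the non-invertible 2D-HC) is topologically mixing, because every state has a self-loop and every pair communicates in at most two steps. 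Let $\iota$ be the itinerary map (defined for $\lambda$-a.e.\ point), and let $\mu$ be the stationary Markov measure on $\Sigma$ whose stationary distribution $(\tfrac13,\tfrac16,\tfrac16,\tfrac13)$ is the vector of areas/volumes of the partition elements and whose transitions are $p(a'\mid a)=|R_{a'}\cap F(R_a)|/|F(R_a)|$. A cylinder-by-cylinder computation, using volume preservation and linearity of $F$ on each partition element, gives $\iota_*\lambda=\mu$; Perron--Frobenius applied to the irreducible aperiodic transition matrix then shows that $(\Sigma,\sigma,\mu)$ is mixing, and in particular ergodic.

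For the 3D-HC map, I would next argue that $\iota$ is an a.e.\ bijection, so that $([0,1]^3,F,\lambda)$ is measurably isomorphic to $(\Sigma,\sigma,\mu)$ and therefore ergodic. The $x$-coordinate is recovered from the forward itinerary via the ternary expansion associated with $\tau$, and the $y$-coordinate from the backward itinerary via the three-branch expanding map $\sigma$, whose branches $[0,\tfrac23),[\tfrac23,\tfrac56),[\tfrac56,1]$ correspond respectively to $a_{-1}\in\{A,D\},\{B\},\{C\}$. For the $z$-coordinate, two points sharing a bi-infinite itinerary must satisfy $|z_0-z'_0|\le s_k^{-1}$ for every $k\ge 0$, where $s_k=\prod_{i=1}^k \phi'_{a_{-i}}$ is the cumulative backward $z$-slope, equal to $2$ on $\{A,D\}$ and $\tfrac12$ on $\{B,C\}$. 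Under $\mu$ these cases have probabilities $\tfrac23$ and $\tfrac13$, so $\log s_k$ is a random walk with positive drift $\tfrac13\log 2$, whence $s_k\to\infty$ almost surely and $z_0=z'_0$.

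For the 2D-HC map, the projection $\Pi(x,y,z)=(x,z)$ satisfies $F_{2D}\circ \Pi=\Pi\circ F_{3D}$ and pushes three-dimensional Lebesgue to two-dimensional Lebesgue, exhibiting the 2D-HC system as a measure-theoretic factor of the ergodic 3D-HC system; hence it is itself ergodic. The pointwise statement for $\langle\phi\rangle$ then follows from Birkhoff's ergodic theorem applied to each continuous $\phi$.

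The main obstacle is the a.e.\ injectivity of the 3D-HC coding in the $z$-direction. A naive forward-only argument fails because the forward $z$-slopes have negative drift $-\tfrac13\log 2$ under $\mu$ and hence a.s.\ finite supremum; this is precisely why the one-sided coding for 2D-HC is not essentially injective on a set of full measure, and why the passage through the invertible 3D-HC extension is unavoidable. The random-walk estimate giving $\sup_k s_k=\infty$ on the backward side is elementary once the symbolic setup is in place, but it is the decisive technical ingredient.
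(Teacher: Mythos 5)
Your argument fails at its very first step: $\{\A,\B,\C,\D\}$ is not a Markov partition for these maps, and the symbolic dynamics is not a subshift of finite type. The one-step transitions you list are correct, but they do not generate the set of realized itineraries. Concretely, the word $\A\A\B\C$ is admissible for your transition matrix ($\A\to\A$, $\A\to\B$, $\B\to\C$ are each permitted), yet no trajectory has this itinerary: two passages through $\A$ force $z\in[0,\tfrac14)$, one passage through $\B$ doubles this to $z\in[0,\tfrac12)$, and $\C$ requires $z\ge\tfrac12$. More generally, after $j$ consecutive $\A$'s the symbol $\C$ cannot occur until at least $j$ intervening $\B$'s have been applied, so admissibility depends on arbitrarily long prefixes; this is exactly the content of Appendix B of the paper, which proves these maps are \emph{not} subshifts of finite type. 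The geometric reason is that the $z$-direction is stable on $\A\cup\D$ and unstable on $\B\cup\C$, so there is no invariant splitting for a Markov partition to respect. Consequently $\iota_*\lambda\ne\mu$ (the cylinder $[\A\A\B\C]$ has positive $\mu$-measure but empty $\iota$-preimage), the proposed cylinder-by-cylinder computation cannot succeed, and Perron--Frobenius applied to the $4\times 4$ matrix says nothing about the actual system. Your later ingredients --- a.e.\ injectivity of the coding via a positive-drift backward random walk in the $z$-slopes, and the factor relation $\pi\circ F_{3D}=F_{2D}\circ\pi$ for the 2D map --- are sensible in themselves, but they sit on top of a symbolic model that does not exist; repairing this would require a countable-state or non-Markov coding together with a separate proof that the shift is ergodic for the correct push-forward measure, which is essentially the original problem again.

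For contrast, the paper avoids symbolic dynamics entirely. It shows that for a.e.\ $x$ the leaf $L_x=\{x\}\times[0,1]^2$ satisfies $\Diam(F^n(L_x))\to 0$: the $y$-direction always contracts, and the product of the $z$-slopes along a $\tau$-typical forward orbit tends to $0$ because a.e.\ $x$ spends two thirds of its time where $d_Z=\tfrac12$. Hence Birkhoff averages of continuous functions are constant on a.e.\ leaf, which reduces ergodicity of $F$ to ergodicity of the tripling map $\tau$, proved by an elementary trinary-interval argument; the 2D case then follows by the same projection you use. Note that your drift computation is the mirror image of theirs: the paper exploits \emph{forward} contraction of leaves transverse to $x$, whereas you try to exploit backward expansion to force injectivity of a coding that is not well founded.
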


In Sec.~\ref{sec:ergodicity} we prove Thm.~\ref{thm:ergodic0}.
 A key observation for the 3D-HC map is that a set of the form
 $\{x\}\times[0,1]^2$, which we will call {\it a leaf at $x$}, is mapped into the leaf at $\tau(x)$.
 The ergodicity would follow immediately from the ergodicity of $\tau$
 \cite[Theorem~1.11]{walters_1982}, if each leaf shrank to a point under forward iteration.
 In fact some leaves do not shrink, notably those containing 2-unstable periodic points. 
   We show that almost every leaf shrinks to a point under forward iteration (Prop.~\ref{prop:leaf}),
 and use this result to establish the ergodicity of the 3D-HC map.
  Using the projection $\pi$ one can quickly deduce the ergodicity of the 2D-HC map.

In the same way
we can show 
that $F\times F$ is ergodic, and hence
$F$ is weak mixing
\cite[Theorem~1.24]{walters_1982}.
The following is an immediate consequence of this.
We say a pair $p_1,p_2$ of points in the space is a {\bf  scrambled pair} \cite{Li_1975} if
\[\liminf_{n\to\infty}{\rm dist}(F^n(p_1),F^n(p_2))=0\quad\text{and}\quad\limsup_{n\to\infty}{\rm dist}(F^n(p_1),F^n(p_2))={\rm diam},\]
where ${\rm dist}$ denotes the Euclidean distance and ${\rm diam}$ denotes the diameter of the space.
\begin{theorem}\label{li-yorke}
Let $F$ be the 2D-HC or the 3D-HC map.
Then almost every pair is a scrambled pair.
\end{theorem}


\subsection{Homogeneous regions and index sets} 
For the 3D-HC map $F$, we provide some understanding of the dynamics between different regions of $[0,1]^3$.
Set \[R_1=\A\cup\D\quad\text{and}\quad R_2=\B\cup\C,\] and define
\[H_1=\bigcap_{n=-\infty}^\infty F^{-n}(R_1)\quad\text{and}\quad
H_2=\bigcap_{n=-\infty}^\infty F^{-n}(R_2).\]
The unstable and stable dimensions of all the points in $H_1$ are $1$ and $2$, and 
 the unstable and stable dimensions of all the points in $H_2$ are $2$ and $1$. These two sets, called {\bf index sets} \cite{saiki_2018c}, are the source of heterogeneous chaos.
   See Fig.~\ref{fig:index}.

   \begin{figure}
\centering
  \subfloat[]{
    \includegraphics[width=.365\textwidth,height=.31\textwidth]{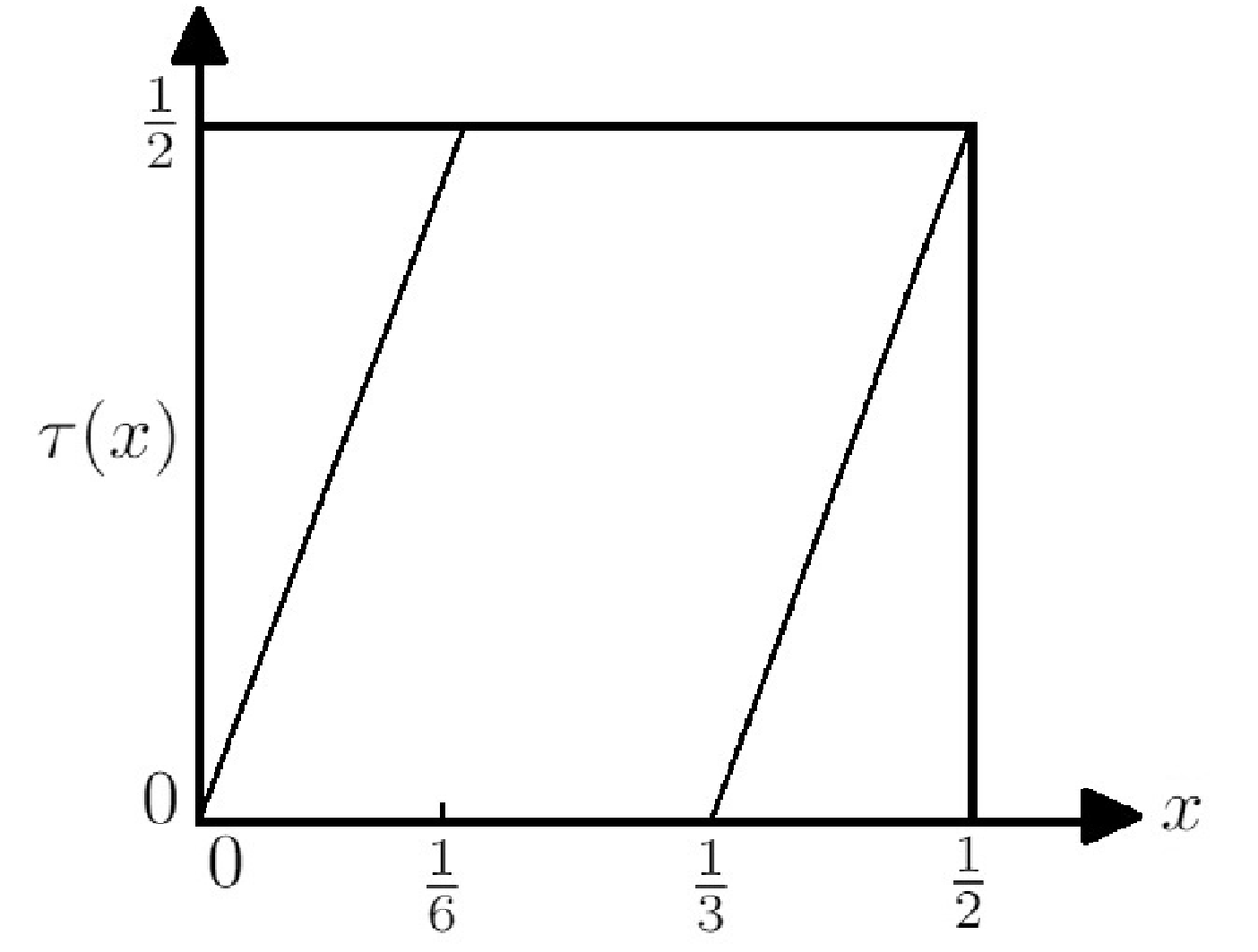}}
    \subfloat[]{
    \includegraphics[width=.385\textwidth,height=.305\textwidth]{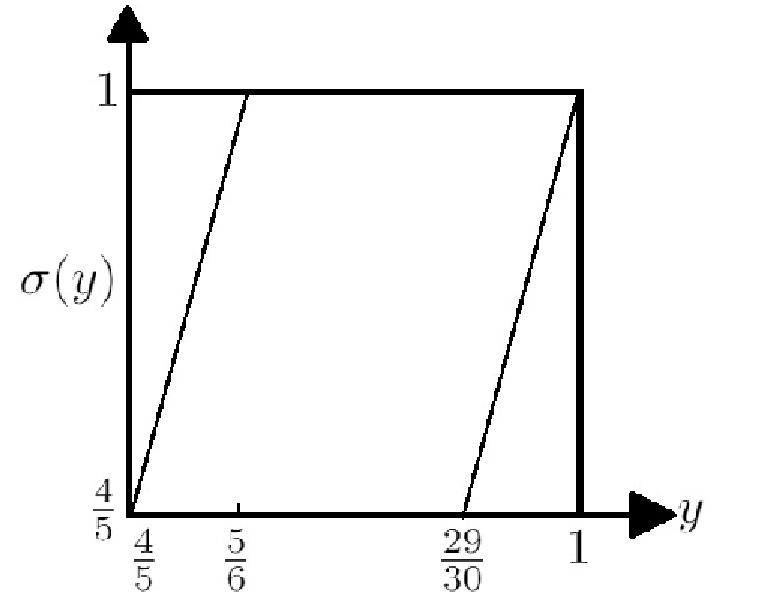}}
\caption{ {\bf Detail of Fig.~\ref{fig:3H} showing  parts of the domains of $\tau(x)$ and $\sigma(y).$} 
{\bf (a):} This is the restriction of $\tau$ to {$[0,\frac{1}{2}]$}, 
the smallest interval containing all the $x$ values of points 
in $\bigcap_{n=0}^{\infty} F^{-n}(R_1).$
The endpoints $0$ and $\frac{1}{2}$ are fixed points of $\tau$. 
The slope is $3$.
{\bf (b):} This is the restriction of $\sigma$ to {$[\frac{4}{5},1]$},
the smallest interval containing all the $y$ values of points in
$\bigcap_{n=0}^{\infty} F^{n}(R_2).$ 
The endpoints $\frac{4}{5}$ and $1$ are fixed points of $\sigma$. 
The slope is $6$.
}
\label{fig:index-maps}
\end{figure}

The {\bf heteroclinic sets} $\HH_{1,2}$ and $\HH_{2,1}$
are the following:
\begin{align*}
\HH_{2,1}&=
\bigcup_{m=0}^\infty F^{-m}\left(\bigcap_{n=0}^\infty F^{-n}(R_1)\right)\cap
\bigcup_{m=0}^{\infty} F^{m}\left(\bigcap_{n=0}^{\infty} F^{n}(R_2)\right),\\
\HH_{1,2}&=
\bigcup_{m=0}^\infty F^{-m}\left(\bigcap_{n=0}^\infty F^{-n}(R_2)\right)\cap
\bigcup_{m=0}^{\infty} F^{m}\left(\bigcap_{n=0}^{\infty} F^{n}(R_1)\right).
\end{align*}
These sets can be described in more detail as follows: $p\in\HH_{2,1}$
if and only if $p$ is in the stable  set of a point in $\HH_1$ and is in the unstable 
 set of a point in $\HH_2$, i.e.,
  there are integers $n^+(p)\ge0$ and $n^-(p)\le 0$
 such that $F^n(p)\in R_1$
for all $n \ge n^+(p)$ and $F^n(p)\in R_2$ for all $n \le n^-(p)$. The set $\HH_{1,2}$ can be described analogously, switching the subscripts $1$ and $2$.
Two subsets $\HH_{2,1}^*$ and $\HH_{1,2}^*$ defined by
\begin{align*}
\HH_{2,1}^*&=
\bigcap_{n=0}^\infty F^{-n}(R_1)\cap\bigcap_{n=1}^\infty F^{n}(R_2)\subset \HH_{2,1},\\
\HH_{1,2}^*&=
\bigcap_{n=0}^\infty F^{-n}(R_2)\cap\bigcap_{n=1}^\infty  F^{n}(R_1)\subset \HH_{1,2},
\end{align*}
are easy to visualize as shown in 
 Fig.~\ref{fig:index}. For example $\HH_{2,1}^*$ is the set of $p\in \HH_{2,1}$ for which
$n^+(p) =0$ and  $n^-(p) = -1$.

\begin{figure}
\centering
\begin{minipage}{.65\linewidth}
\centering
\subfloat[]{\includegraphics[width=.65\textwidth]{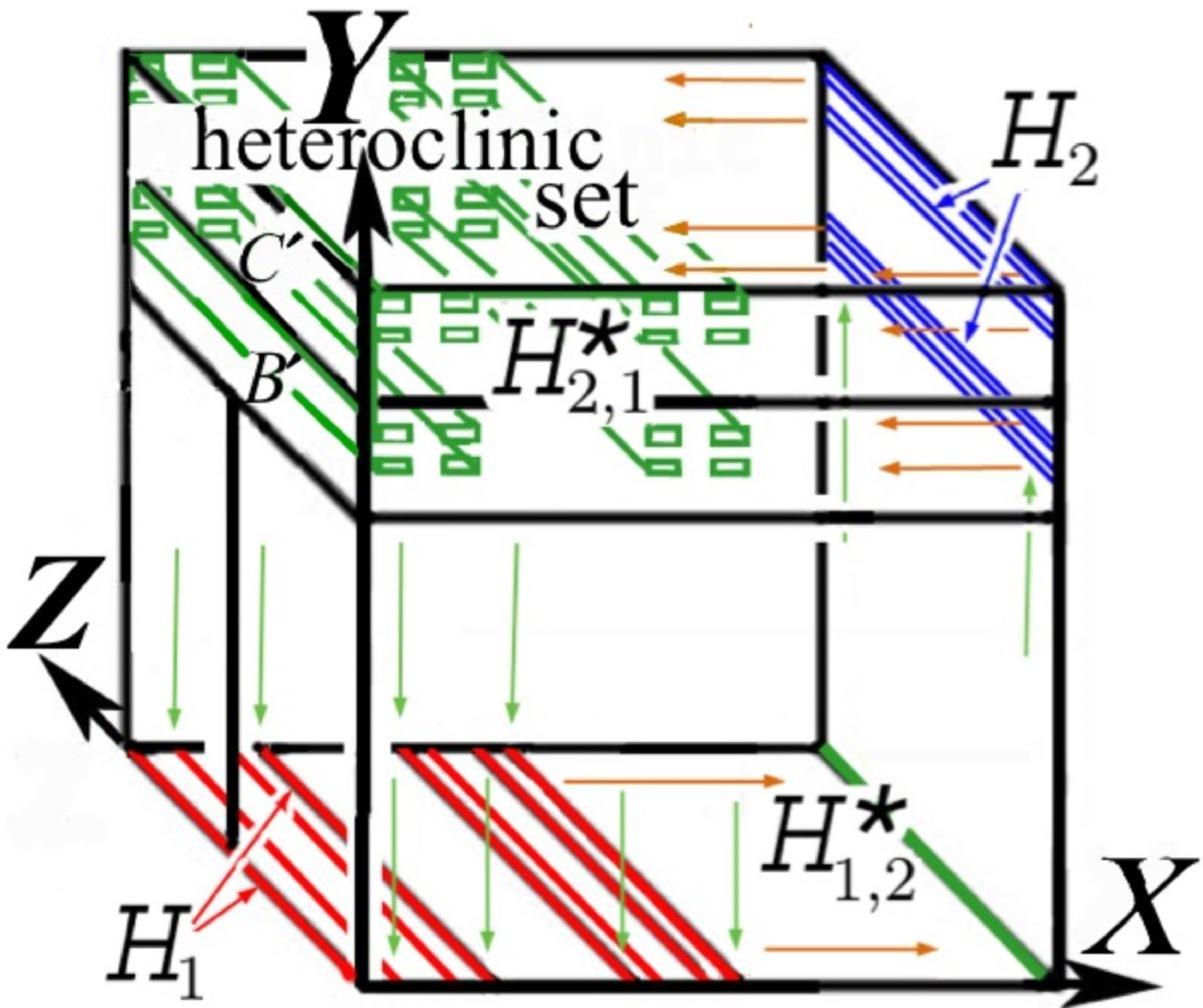}}
\end{minipage}%
\begin{minipage}{.35\linewidth}
\centering
\subfloat[]{\includegraphics[width=.65\textwidth,height=1.1\textwidth]{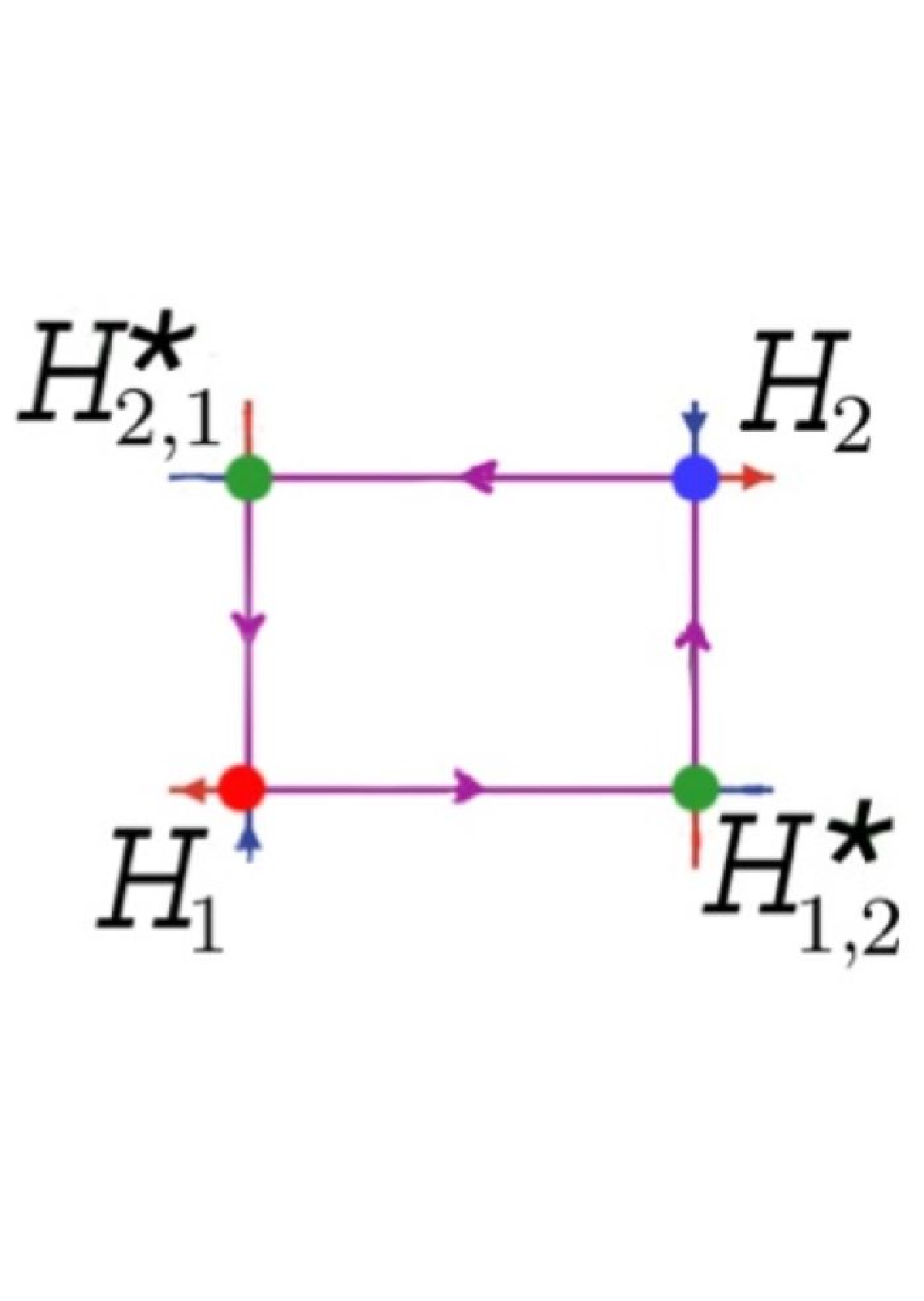}}
\end{minipage}\par\medskip
\caption{
{\bf The index sets for the 3D-HC map.}
{\bf (a):}
The 1D unstable index set $\HH_1$ shown in red lies in the plane $y=0$.
The 2D unstable index set $\HH_2$ shown in blue lies in the plane $x=1$.
All are shown in the cube partitioned according to the symbol sets of $F^{-1}$.
The vertical (green) arrows show the stable directions of $\HH_1$ and $\HH_2$; the horizontal (red) arrows show their unstable directions. See the text for the heteroclinic sets 
$\HH^{*}_{1,2}(\subset\HH_{1,2})$ (a straight line segment), $\HH^{*}_{2,1}(\subset\HH_{2,1})$ (the product of two crudely drawn Cantor sets with a straight line segment parallel to the $Z$-axis).
See also Fig.~\ref{fig:index-maps}.
{\bf (b):}
A symbolic representation of the mutual intersections of the stable and unstable sets of $\HH_{1}$ and $\HH_{2}$.
}\label{fig:index}
\end{figure}

\ifincludeXX

 \subsection{A modified 3D-HC map without a dominant expanding direction}
 \label{sec:nondominant}

In choosing the definition of the 
3D-HC map, we have for simplicity 
chosen to have a dominant expanding 
direction, the $X$ direction.
An alternative formulation of Fig.~\ref{fig:3H} replaces the 
region $\B$ with regions 
with regions $B_i$, $1\leq i\leq k$ stacked vertically. 
See Fig.~\ref{fig:3Hgen}.
In the case $k>4$, the locally dominant expanding direction is $Z$ for $\frac{2}{3}<x<1$ 
and is $X$ for $0<x<\frac{2}{3}$. 
Our results hold for these modified maps. Our proof of ergodicity requires 
$\frac{1}{2}^{2/3}k^{1/3}\neq1$ which is true if $k\neq4$.
In the case $k<4$, one can show the ergodicity of $F^{-1}$ using the argument in this paper. The ergodicity of $F^{-1}$ implies that for $F$.

\begin{figure}
\centering
\includegraphics[width=.93\textwidth,height=.35\textwidth]{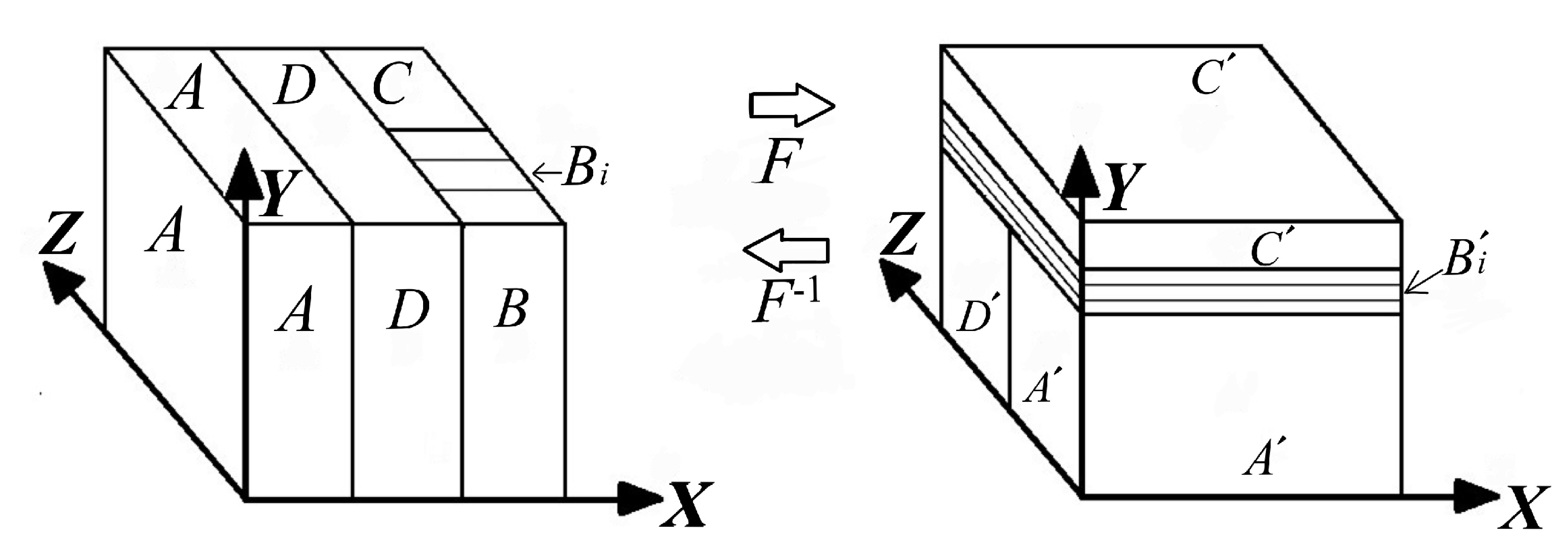}
\caption{{\bf A modified 3D-HC map without a dominant expanding direction.}
A modified 2D-HC map can be similarly defined.  
}\label{fig:3Hgen}
\end{figure}

\section{Establishing heterogeneous chaos}
\label{sec:periodic-points}
This section is devoted to the proof of Thm.~\ref{thm:main}.
After preliminaries in Sec.~\ref{box-s}, 
we introduce {\it $(j,k)$-bricks} in Sec.~\ref{brick},
as a tool to prove the existence of
a hyperbolic periodic point. 
In Sec.~\ref{brick-baker} we display $(j,k)$-bricks 
for the 3D-baker map, and use them to show the density of hyperbolic periodic points.
This argument is a paraphrase of the standard one by way of a Markov partition, and does not work for the 3D-HC map.
To obtain an expansion in the $Z$ direction required in the definition
of a $(j,k)$-brick,
 we introduce biased points in Sec.~\ref{sect-bias},
and from them construct $(j,k)$-bricks in Sec.~\ref{brick-hc}.
In Sec.~\ref{dense-per} we establish one part of Thm.~\ref{thm:main} on hyperbolic periodic points of the 3D-HC map.
In Sec.~\ref{sec:transitivity} we show the existence of a dense trajectory and complete the proof of Thm.~\ref{thm:main}.

\subsection{ Boxes and symbol sets}\label{box-s}
We say a set 
$Q\subset[0,1]^3$ 
is a {\bf box} if it is the Cartesian product of three non-empty intervals.
Two kinds of boxes that will repeatedly occur deserve special attention. In the spirit of the baker of the baker map, we name them after two boxes a baker might use. 
\allblack 
We say a box $Q$ is a {\bf breadbox} if precisely one coordinate has length $1$.
For example, if its $X$ coordinate has length $1$, we can call $Q$ an $X$ breadbox.
If $Q$ has precisely two coordinates 
of length 1, we call it a {\bf pizzabox}. For example,
if its $X$ and $Z$ coordinates have length $1$, we call $Q$
an $XZ$ pizzabox. For the 3D-HC map $F$,
$A$ and $D$ are $YZ$ pizzaboxes 
and 
$A'=F(A)$ and $D'=F(D)$ are $X$ breadboxes.
Similarly, 
$B$ and $C$ are 
$Y$ breadboxes
and  $B'=F(B)$ and $C'=F(C)$
are $XZ$ pizzaboxes.

For the 3D-baker or the 3D-HC map $F$, we refer to the boxes $\A,\B,\C,\D$ as {\bf symbol sets} for $F$.
On each symbol set, $F$ 
has the form
\begin{equation}
p\mapsto (c_X(p) + d_X(p)x,c_Y(p) + d_Y(p) y, c_Z(p) + d_Z(p) z),
\label{mapsto:affine}
\end{equation} 
 where $p=(x,y,z)$ and the three $c$'s and $d$'s depend on the symbol set
 containing $p$.
Since the Jacobian matrix of such a map is diagonal, we refer to a map as in \eqref{mapsto:affine} as a {\bf linear diagonal map}.
The inverse map $F^{-1}$ is 
a linear diagonal map on each of the boxes $F(S),$ $S\in\{A,B,C,D\}$ which are symbol sets for $F^{-1}$.

\begin{figure}
\centering
\includegraphics[width=.40\textwidth]{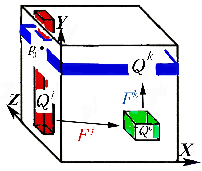}
\caption{
{\bf 
Construction of a brick containing a periodic point.}
This figure shows a 
key concept for the proof of the existence of a 2-unstable periodic point in $(0,1)^3$ for the 3D-baker or the 3D-HC map.
The box $Q^0\subset(0,1)^3$ is chosen to be an ``interior $(j,k)$-brick'', thereby guaranteeing that there is a periodic point $p_0\in Q^{-j}\cap Q^k$ of period $j+k$.
It follows that $F^j(p_0)\in Q^0$ is a periodic point of period $j+k$.
}
\label{fig:brick} 
\end{figure}

\subsection{Construction of hyperbolic periodic points from bricks}\label{brick}
Let $F$ be the 3D-baker or the 3D-HC map.
The $X$ direction is unstable and the $Y$ direction is stable, 
in that $d_X=3>1$ and $d_Y<1$ on every symbol set.
If $p$ is a periodic point of period $n$,
the number $\chi_Z(p)= \prod_{i=0}^{n-1}~{d_Z}(F^i(p))$
determines 
the type of $p$:
it is 2-unstable if $\chi_Z(p)>1$, and 
1-unstable if $\chi_Z(p)<1$.

We introduce a key ingredient for constructing a $2$-unstable periodic point of $F$.
Let $j,k\in \N$ and assume $k\geq1$.
For a box $Q^0$ and an integer $-j\leq i\leq  k$ we write 
     $\BF Q^i\UNBF= F^i(Q^0),$
     $Q^i={Q^{i}}_{X}\times{Q^{i}}_{Y}\times{Q^{i}}_{Z}$.
We say $Q^0$
is a $ \boldsymbol{(j,k)}$-{\bf brick}
if 
\begin{itemize}
\item[$\circ$] for each $-j\leq i\leq k-1$,
$Q^i$ is a subset of some 
symbol set for $F$; 
\item[$\circ$]
$Q^k$ 
is an $XZ$ pizzabox; 
\item[$\circ$] $Q^{-j}$ 
is a $Y$ breadbox.
\end{itemize}
 We say a $(j,k)$-brick $Q^0$ is
an {\bf interior} $ \boldsymbol{(j,k)}$-{\bf brick}, or simply an {\bf interior brick}
if $\overline{{Q^{-j}}_X},\overline{{Q^{-j}}_Z}\subset(0,1)$ and 
$\overline{{Q^{k}}_Y}\subset(0,1)$.
%

The notion of an interior brick is motivated by the fixed point theorem of a contraction mapping: 
let $J$ be a bounded
interval and $f\colon J\to J$ be a map of the form
 $f(x)= c+dx$, $c,d\in\mathbb R$ 
 such that 
 $\overline{f(J)}$ is contained in the interior of $J$.
 Then 
 there is a unique fixed point of $f$ in the interior of $J$.
The following proposition extends this idea to the 3D-HC map,
see Fig.~\ref{fig:brick}.
The proof is standard but is included for completeness. 

\begin{proposition}\label{prop:periodicpoint}
Let $F$ be either the 3D-baker map or 
the 3D-HC map.
Let $j\ge0,k\geq1$ and let $Q^0$ be an interior $(j,k)$-brick.
Then the interior of $Q^0$ contains a $2$-unstable 
{hyperbolic} periodic point of period $j+k$. 
\end{proposition}
\begin{proof}
Since $Q^0$ is 
a $(j,k)$-brick, 
$F^{j+k}|_{Q^{-j}}\colon Q^{-j}\to Q^{k}$ is a linear diagonal map as in \eqref{mapsto:affine} where
$d_X,d_Z >1$ and $d_Y <1.$
Since $Q^{0}$ is an interior $(j,k)$-brick,
$\overline{{Q^{-j}}_{X}}\subset(0,1)$ and the $X$ coordinate of $F^{j+k}|_{Q^{-j}}$
maps ${Q^{-j}}_{X}$ linearly 
onto $[0,1)$,
so it has unique fixed point in $(0,1)$.
The same is true for the $Z$ coordinate.
The $Y$ coordinate of $F^{j+k}|_{Q^{-j}}$ maps ${Q^{-j}}_Y$ linearly onto an interval whose closure is contained in $(0,1)$.
In each case there is a unique fixed point, whose coordinates we denote as
$x_{-j}, y_{-j},z_{-j}$, each in the interiors of 
$ {Q^{-j}}_{X}$, ${Q^{-j}}_{Y}$, ${Q^{-j}}_{Z}$ respectively.
Then $p= (x_{-j},y_{-j},z_{-j})$ is in the interior of $ Q^{-j}$ and is a 
fixed point of $F^{j+k}$.
Then $F^{j}(p)\in Q^0$ 
is a fixed point for $F^{j+k}$.
Since $d_X,d_Z >1$ and $d_Y <1,$ $F^{j}(p)$
 is a 2-unstable hyperbolic periodic point of period $j+k$.
\end{proof}

\subsection{Bricks for the 3D-baker map}\label{brick-baker}
Let $j,k\in\N$, $k\geq1$.
All $(j,k)$-bricks for the 3D-baker map $F$ are of the form
\[
Q_{j,k}(a,b,c)=
\left(\frac{a}{ 2^{k}},\frac{a+1}{ 2^{k}}\right)\times
\left(\frac{b}{ 4^{j}},\frac{b+1}{ 4^{j}}\right)\times
\left(\frac{c}{ 2^{k}},\frac{c+1}{ 2^{k}}\right),\]
where $a,b,c\in \N$ and $a,c\le2^k -1$ and $b\le 4^j -1$.
Moreover, $Q_{j,k}(a,b,c)$ is an interior $(j,k)$-brick if 
$
a,b,c >0 \mbox{ and }\frac{a+1}{ 2^{k}}, \frac{b+1}{ 4^{j}}, \frac{c+1}{ 2^{k}} <1.
$
For $k>1$, such bricks map onto bricks: $F(Q_{j,k}(a,b,c)) = Q_{j+1,k-1}(a',b',c'),$ for some $a',b',c'\in\N$.
Therefore, there are $a',b',c'\in\N$ for which
\begin{align*}
F^{k}(Q_{j,k}(a,b,c)) =& Q_{j+k,0}(0,b',0)\\
=& \left(0,1\right)\times\left(\frac{b'}{ 4^{j+k}},\frac{b'+1}{ 4^{j+k}}\right)\times\left(0,1\right),\\
F^{-j}(Q_{j,k}(a,b,c)) =& Q_{0,j+k}(a',0,c')\\
=& \left(\frac{a'}{ 2^{j+k}},\frac{a'+1}{ 2^{j+k}}\right)\times
\left(0,1\right)\times
\left(\frac{c'}{ 2^{j+k}},\frac{c'+1}{ 2^{j+k}}\right).
\end{align*}
It follows that almost every point of $\cube$ is contained in arbitrarily small bricks, and
 hence 2-unstable hyperbolic periodic points of the 3D-baker map are dense.

\subsection{Density of biased points}\label{sect-bias}
In Sec.~\ref{brick-baker} we have seen that almost every point in $[0,1]^3$ 
is contained in an arbitrarily small interior brick, and hence 2-unstable periodic points are dense. We extend this argument to the 3D-HC map.
The new difficulty 
is to determine the length of the box in the $Z$ direction.
To this end we need some preliminary considerations.

For the rest of this subsection, let $F$ be the 3D-HC map.
We say $p\in\cube$ is {\bf regular} if for each $n\in\mathbb Z$, 
$F^n(p)$ is in the interior of a symbol set for $F$. We say a point $p$ is {\bf irrational} 
if each of its coordinates is irrational. 
The following proposition is elementary, but we include a proof for readers' convenience.
\begin{proposition}\label{prop:regular}
Every irrational point in $[0,1]^3$ is regular. In particular, almost every point in $[0,1]^3$ is regular. 
\end{proposition}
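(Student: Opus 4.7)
The plan is to exploit the fact that the boundaries of the symbol sets $\A,\B,\C,\D$ (and of the corresponding symbol sets $\A',\B',\C',\D'$ for $F^{-1}$) lie in finitely many rational hyperplanes, while each branch of $F$ and of $F^{-1}$ is a linear diagonal map with rational coefficients. Consequently, rationality of each coordinate is preserved, and so is irrationality.

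Concretely, I would first read off from the definition of the symbol sets that a point $p=(x,y,z)$ fails to lie in the interior of some symbol set precisely when one of its coordinates lies in a specified finite set of rationals: $x\in\{0,\tfrac13,\tfrac23,1\}$, or $y\in\{0,1\}$, or $z\in\{0,\tfrac12,1\}$. Hence the set $\mathrm{Bd}$ of points not lying in the interior of any symbol set of $F$ is a finite union of coordinate slabs at rational levels, and likewise the set $\mathrm{Bd}'$ of points not lying in the interior of any symbol set of $F^{-1}$ is such a finite union (now using the boundaries of $\A',\B',\C',\D'$).

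Next I would verify the key invariance property: on the interior of each symbol set $\mathbb X\in\{\A,\B,\C,\D\}$, $F$ has the linear diagonal form \eqref{mapsto:affine} in which every constant $c_X,c_Y,c_Z,d_X,d_Y,d_Z$ is rational (this is immediate from \eqref{tau} and \eqref{eq:3D-HC}). Therefore $F$ sends points with all three coordinates irrational to points with all three coordinates irrational, and in particular avoids $\mathrm{Bd}$. The analogous statement holds for $F^{-1}$, since on each $\mathbb X'$ it is again linear diagonal with rational coefficients (one inverts $\tau$ branchwise, and the $y,z$-branches have slopes $\tfrac32,6,\tfrac12,2$ and rational translations).

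Putting these together, if $p$ is irrational then by induction on $|m|$ every iterate $F^m(p)$, $m\in\mathbb Z$, has all three coordinates irrational and thus lies in the interior of some symbol set; hence $p$ is regular. Since the irrational points form a full Lebesgue measure subset of $[0,1]^3$ (the rational slabs are a countable union of sets of Lebesgue measure zero), almost every point is regular. I do not anticipate a serious obstacle here; the only thing to watch is the bookkeeping for $F^{-1}$, namely that each of its branches also has rational coefficients so that irrationality of coordinates is preserved under backward iteration as well.
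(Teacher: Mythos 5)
Your proposal is correct and takes essentially the same route as the paper: observe that the symbol-set boundaries for both $F$ and $F^{-1}$ lie on finitely many rational coordinate levels, and that each branch of $F$ and $F^{-1}$ is linear diagonal with rational (nonzero) coefficients, so irrationality of every coordinate is preserved under forward and backward iteration and the whole orbit avoids the boundaries.
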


\begin{proof}
Any one-dimensional map
$x\in\mathbb R \mapsto c + d x\in\mathbb R$ with $c,d\in\mathbb Q\setminus\{0\}$ 
 maps irrational numbers to irrational numbers.
On each symbol set, $F$ and $F^{-1}$ are linear diagonal maps, and 
each component has this form.
Hence 
they map irrational points to irrational points. 
Since there is no irrational point on the boundaries of symbol sets and
almost every point is irrational,
the assertion follows.
\end{proof}

If $p\in\cube$ is a regular point,
 then 
$\log_2d_Z(F^n(p))=-1$ if $F^n(p)\in R_1$ and 
    $\log_2d_Z(F^n(p))=1$ if $F^n(p)\in R_2$.
    We are interested in an atypical point for which the latter case is predominant.
Let $m,n\in \mathbb Z$ with $m\le n$ and define 
\begin{eqnarray*}
\Phi(m,n;p)=
  \begin{cases}
    0&\mbox{ for }m=n;\\
    \displaystyle\sum_{i=m}^{n-1}\log_2d_Z(F^i(p))&\mbox{ for }m<n.
  \end{cases}
\end{eqnarray*}
We say a regular point $p\in[0,1]^3$ is a 
{\bf biased point}
if
\begin{equation}
\lim_{m\to-\infty}\Phi(m,0;p)=+\infty\quad\mbox{and}\quad
\lim_{n\to+\infty}\Phi(0,n;p)=+\infty.\label{n1}
\end{equation}
Since $\tau$ is ergodic and
the volume of $R_1$ is $\frac{2}{3}$ while that of $R_2$ is $\frac{1}{3}$, typical trajectories have more of the iterates in $R_1$ than in $R_2$. Hence the set of biased points has measure zero. 
Nonetheless, this set is dense in $\cube$.


\begin{proposition}\label{prop:dense-bias}
The set $\{(x,y,z)\in[0,1]^3\colon\text{ biased, }z\notin\mathbb Q\}$ is dense in $[0,1]^3$.
\end{proposition}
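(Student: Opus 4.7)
Plan. The goal is to show that every nonempty open box $U = U_X\times U_Y\times U_Z\subset[0,1]^3$ contains a biased point. I would build $x$ and $y$ separately by symbolic dynamics and take $z$ irrational; the resulting point $p=(x,y,z)$ is irrational, hence regular by Proposition~\ref{prop:regular}, so every $L_n(p)$ is well defined.

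Forward condition. The $x$-coordinate evolves under $F$ by the full tripling map $\tau$, so $L_n(p)=+1$ iff $\tau^n(x_0)\in[2/3,1]$, iff the $(n+1)$-st ternary digit of $x_0$ equals $2$. Hence $\Phi(0,n;p)\to+\infty$ is equivalent to: in the ternary expansion $d_1d_2\cdots$ of $x_0$, the number of $2$'s among $d_1,\dots,d_n$ exceeds the number of non-$2$'s by an amount tending to $+\infty$. Pick a long prefix $d_1\cdots d_N$ whose associated ternary cylinder (an interval of length $3^{-N}$) is contained in $U_X$, and set $d_i=2$ for every $i>N$ except at the sparse positions $i=N+2^k$, $k\ge 1$, where $d_i=0$. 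The resulting $x\in U_X$ is irrational because the digit sequence is aperiodic, and among the first $n>N$ digits at most $N+\log_2 n$ are non-$2$'s, so the required excess is at least $n-2N-2\log_2 n\to+\infty$.

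Backward condition. A direct computation from \eqref{eq:3D-HC} shows that $F^{-1}$ acts on the $y$-coordinate by the 1D map $\sigma$ of Fig.~\ref{fig:index-maps}(b), and that for every $k\ge 1$, $x_{-k}\in[2/3,1]$ iff $y_{-(k-1)}=\sigma^{k-1}(y_0)\in[2/3,1]$. Thus $\Phi(m,0;p)\to+\infty$ as $m\to-\infty$ translates into: the forward $\sigma$-orbit of $y_0$ visits $[2/3,1]$ with excess count diverging. Each of the three $\sigma$-branches $[0,2/3)$, $[2/3,5/6)$, $[5/6,1]$ is a surjection onto (essentially) $[0,1]$, so $\sigma$-cylinders of length $N$ are non-degenerate intervals of length at most $(2/3)^N$. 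Pick a prefix whose cylinder lies inside $U_Y$, and extend with the middle symbol $[2/3,5/6)$ at every subsequent position except at $i=N+2^k$, $k\ge 1$, where either of the other two symbols is allowed. This produces $2^{\aleph_0}$ distinct itineraries, each specifying a unique point of $U_Y$ by expansion of $\sigma$; since at most countably many of these points are rational, an irrational $y$ satisfying the backward divergence exists.

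Setting $p=(x,y,z)$ with any irrational $z\in U_Z$ yields a biased point in $U$. The main obstacle is the backward half: the backward $x$-orbit is not a shift in $x_0$ alone but couples to $y$ through the symbol dynamics of $F^{-1}$, so one must first identify the induced 1D map $\sigma$ on $y$ and the equivalence $x_{-k}\in[2/3,1]\iff\sigma^{k-1}(y_0)\in[2/3,1]$. A secondary subtlety is that a purely constant ``mostly $[2/3,5/6)$'' tail forces $y$ to be the rational fixed point $4/5$ of $\sigma$ on $[2/3,5/6)$, so the sparse perturbations plus a counting argument (uncountably many itineraries versus countably many rationals) are needed to extract an irrational $y$.
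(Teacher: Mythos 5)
Your proposal is correct and follows essentially the same route as the paper: the forward condition \eqref{n} is arranged through the ternary ($\tau$-)itinerary of $x_0$, the backward condition \eqref{n1} through the $\sigma$-itinerary of $y_0$, linked by the same key equivalence $y_n\in\left[\frac{2}{3},1\right]\iff L_{n-1}=+1$, and density follows by prescribing cylinders inside $U_X$ and $U_Y$. Your version is in fact more careful than the paper's sketch on two points it leaves implicit --- ensuring the constructed point is irrational (hence regular, so all $L_n$ are defined) and avoiding the rational point forced by an eventually constant $\sigma$-itinerary --- so no changes are needed.
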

\begin{proof}

Fix 
$x_0,y_0\in(0,1)$
 such that the second condition in \eqref{n1} 
 holds for $p=(x_0,0,0)$ and the first condition in \eqref{n1} holds for $p=(0,y_0,0)$. The sets
 $D_{x_0}=\bigcup_{n=0}^\infty\{\tau^{-n}(x_0)\}$ and
 $D_{y_0}=\bigcup_{n=0}^\infty\{\sigma^{-n}(y_0)\}$
 are dense in $[0,1]$.
The set
 $D_{x_0}\times D_{y_0}\times ([0,1]\setminus\mathbb Q)$ is dense in $\cube$ and consists of biased points. 
\end{proof}

Let $p$ be a biased point. We say an integer $k\geq1$ is 
{\bf right biased} for $p$
 if 
\begin{equation}\label{ineq:biased_pair1}
\Phi(i,k;p) > 0\mbox{ for all }i<k.
\end{equation}
For an integer $j\ge1$, 
$-j$ is 
{\bf left biased} for $p$
if
\begin{equation}\label{ineq:biased_pair2}
\Phi(-j,i;p) > 0\mbox{ for all } i > -j.
\end{equation}
We say 
$(j,k)$ is a {\bf biased pair} for $p$ if $j\ge1,k\ge1$ and $-j$ and $k$ are left and right biased for $p$, respectively.

\begin{proposition}\label{prop:infinite-bias}
Let $p\in[0,1]^3$ be a biased point. For any $N>0$
there exist $j>N$ and $k>N$ such that $(j,k)$ is a biased pair for $p$.
\end{proposition}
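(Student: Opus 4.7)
The plan is to encode the data $(L_n(p))_{n\in\mathbb Z}$ in a single integer-valued walk by setting $S_0:=0$ and $S_{n+1}-S_n:=L_n(p)\in\{-1,+1\}$, so that $\Phi(m,n;p)=S_n-S_m$ whenever $m\le n$. In these terms the hypothesis that $p$ is biased becomes $S_n\to+\infty$ as $n\to+\infty$ and $S_n\to-\infty$ as $n\to-\infty$; the index $k>0$ is right biased in the sense of \eqref{ineq:biased_pair1} iff $S_k>S_m$ for every $m<k$ (that is, $S_k$ is a strict record maximum when one looks back over all earlier times, positive and negative); and $-j$ is left biased in the sense of \eqref{ineq:biased_pair2} iff $S_{-j}<S_m$ for every $m>-j$.

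After verifying this reformulation from \eqref{def:Phi}, I would show there are arbitrarily large right-biased $k$. Because $S_m\to-\infty$ as $m\to-\infty$, the running maximum $M_k:=\max_{m\le k}S_m$ is attained at some finite $m$ and hence is well defined; because $S_n\to+\infty$, the non-decreasing sequence $(M_k)_{k\ge 0}$ tends to $+\infty$. Whenever $M_k>M_{k-1}$ one has $S_k=M_k>M_{k-1}\ge S_m$ for every $m\le k-1$, so such a $k$ is right biased. Since $(M_k)$ is integer-valued and unbounded above, these strict jumps occur infinitely often, yielding right-biased $k$ larger than any prescribed $N$. A mirror-image argument, applied to the running minimum $\mu_j:=\min_{m\ge -j}S_m$ (attained by $S_n\to+\infty$ and tending to $-\infty$ in $j$ since $S_{-j}\to-\infty$), produces arbitrarily large $j$ with $-j$ left biased.

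Finally, given $N>0$ I select a right-biased $k>N$ and a left-biased $-j$ with $j>N$ independently; then $(j,k)$ is a biased pair for $p$ by definition, since the right- and left-bias conditions involve only times $m<k$ and $m>-j$ respectively and are completely decoupled. The only point that requires any care is recognising that the two one-sided divergences are exactly what is needed to make the running extrema actually attained (so that ``strict record'' indices genuinely occur rather than being merely supremal); the fact that $L_n(p)\in\{-1,+1\}$ then forces $S_n\neq S_{n-1}$ always, which rules out record ties and delivers the strict inequalities required by \eqref{ineq:biased_pair1}--\eqref{ineq:biased_pair2}. Everything else is routine bookkeeping.
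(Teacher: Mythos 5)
Your proof is correct. Note that the paper itself offers no proof of this proposition (it explicitly leaves it to the reader, with only the hint that a biased point has infinitely many left- and right-biased integers); your reformulation of $\Phi(m,n;p)=S_n-S_m$ as increments of a $\pm1$-valued walk, with right-biased $k$ identified as strict record maxima of the running maximum $M_k=\max_{m\le k}S_m$ and left-biased $-j$ as strict records of the running minimum, is exactly the natural argument the authors intend, and your observation that the two one-sided divergences guarantee the extrema are attained and that the records recur infinitely often closes the only delicate points.
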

\begin{proof}
The first condition in \eqref{n1} implies that $\min_{i\leq0}\Phi(i,0,p)\leq0$ exists.
By the second condition in \eqref{n1}, for any $N>0$ there exists $k>N$ such that 
 \begin{equation}\label{bias-new}\Phi(0,k;p)>\max_{0\leq i\leq k-1}\Phi(0,i;p)\quad\text{and}\quad
 \Phi(0,k;p)>-\min_{i\leq0}\Phi(i,0,p).\end{equation}
 The first inequality in \eqref{bias-new} implies $\Phi(i,k;p)=\Phi(0,k;p)-\Phi(0,i;p)>0$ for all $0\leq i\leq k-1$.
 The second inequality in \eqref{bias-new} implies
 $\Phi(i,k;p)=\Phi(0,k;p)+\Phi(i,0;p)>0$
 for all $i\leq0$.
 Therefore
\eqref{ineq:biased_pair1} holds and so
$k$ is right biased for $p$.
The same argument shows that for any $N>0$ there exists $j>N$ such that  $-j$ 
is left biased for $p$.
\end{proof}

\subsection{Construction of bricks for the 3D-HC map}\label{brick-hc}
In this subsection we construct interior $(j,k)$-bricks for the 3D-HC map $F$.
\begin{proposition}[Interior brick]\label{prop:InteriorBrick}
Let $p=(x,y,z)\in[0,1]^3$ be a biased point such that $z\notin\mathbb Q$. For any open subset $U$ of $[0,1]^3$
containing $p$, there exist
a biased pair $(j,k)$ for $p$
and an interior $(j,k)$-brick 
which is contained in $U$.
\end{proposition}
\begin{proof}
Let $j,k\geq1$ be such that $(j,k)$ is a biased pair for $p$.
Define $X^0$ to be the maximal open subinterval of $(0,1)$ containing $x$
on which $\tau^k$ is continuous.
Define $Y^0$ to be the maximal open subinterval of $(0,1)$ containing $y$
on which $\sigma^j$ is continuous.
Since $k$ is right biased for $p$,
$\Phi(0,k;p)\geq1$ holds.
Define $Z^0$ to be the interval containing $z$ of the form
\[Z^0 = \left(\frac{c}{2^{\Phi(0,k;p)}},\frac{c+1}{2^{\Phi(0,k;p)}}\right),\quad
c\in\{0,1,\ldots,2^{\Phi(0,k;p)}-1\}.\] 
For each $-j\leq i\leq k$, set $Q^i=F^i(Q^0)$ and write
$Q^i= X^i\times Y^i\times Z^i.$
For all $-j\leq i\leq k-1$,
$Q^i$ is a subset of some symbol set for $F$.
Moreover,
$Q^{-j}$ is a $Y$ breadbox 
and $Q^{k}$ is a $XZ$ pizzabox.
  Therefore,
$Q^0$ is a $(j,k)$-brick containing $p$. 
We have $|X^0|,|Z^0|\to0$ as $k\to\infty$ and
 $|Y^0|\to0$ as $j\to\infty$.
In what follows,
for sufficiently large $j$ and $k$
we show that
\begin{equation}\label{three}
\overline{X^{-j}}\subset(0,1),   \overline{Y^k}\subset(0,1),  \overline{Z^{-j}}\subset(0,1),
\end{equation}
which implies that $Q^0$ is an interior $(j,k)$-brick, finishing the proof of Prop.\ref{prop:InteriorBrick}.

We show \eqref{three} using a separate argument for each of the three intervals. First,
let $(j,k)$ be a biased pair for $p$
such that the following hold:
\begin{equation}\label{jk}\left(x-\frac{1}{3^k},x+\frac{1}{3^{k}}\right)\subset(0,1),\ \left(y-\left(\frac{2}{3}\right)^j,y+\left(\frac{2}{3}\right)^j\right)\subset(0,1).
\end{equation}
By Prop.~\ref{prop:infinite-bias}, there are infinitely many such biased pairs $(j,k)$ for $p$.
Note that $|X^i|$ decreases as $i$ decreases. 
In particular
$|X^i|=\frac{1}{3^{k-i}}$ holds for all $-j\leq i\leq k$. 
Since $x\in X^0$ and $|X^0|=\frac{1}{3^k}$, the first condition in \eqref{jk} gives $\overline{X^0}\subset(0,1)$.
From the definition of $\tau$ in \eqref{tau}, if $\overline{X^{-j}}$ contains $0$ or $1$, 
then $\overline{X^{1-j}}$ contains $0$ or $1$, and by induction 
$\overline{X^{0}}$ contains $0$ or $1$, a contradiction.
Hence $\overline{X^{-j}}\subset(0,1)$ holds.
The same type of argument works for $Y^i$.
Here $|Y^i|$ decreases as $i$ increases. In particular
$|Y^{i}|\le(\frac{2}{3})^{i+j}$
 holds for all $-j\leq i\leq k$.
 Since $y\in Y^0$, the second condition in \eqref{jk} gives 
 $\overline{Y^0}\subset(0,1)$.
  Hence $\overline{Y^{k}}\subset(0,1)$
 holds.

A proof of $\overline{Z^{-j}}\subset(0,1)$ is more difficult since  
$|Z^i|$ is not monotone in $i$.
Set
\[
\beta(p)=-\min_{i\le 0}\Phi(i,0;p) \ge 0,
\]
and fix $M\leq0$ such that
\begin{equation}\label{M}
\beta(p)=-
\Phi(M,0;p).
\end{equation}
Let $(j,k)$ be a biased pair for $p$ such that
 \begin{equation}\label{j}
    j\geq M-1.
\end{equation}
Since $z$ is irrational and $z\in Z^0$,
$\overline {Z^0}$ does not contain any rational number of the form 
\begin{equation}\label{binary-end}
\frac{a}{2^{\beta(p)}}
\mbox{ for some }a\in\N,
\end{equation}
provided $k$ is large enough. In the next paragraph we show  that if $\overline{Z^{-j}}$ contains $0$ or $1$,
then $\overline{Z^0}$ contains a rational number of the form
in \eqref{binary-end}. This yields a contradiction and hence $\overline{Z^{-j}}\subset(0,1)$ holds.

If $d_Z(F^i(p))=2$ and  
$\overline{Z^{i}}$ contains $0$ or $1$, then
 $\overline{Z^{i+1}}$ contains $0$ or $1$.
If $d_Z(F^i(p))=\frac{1}{2}$
and  
$\overline{Z^{i}}$ contains $0$ or $1$, then $0$ or $\frac{1}{2}$ or $1$ is an endpoint of $\overline{Z^{i+1}}$, and if moreover $d_Z(F^{i+1}(p))=2$, then  $\overline{Z^{i+2}}$ contains $0$ or $1$.
Suppose $\overline{Z^i}$ contains $0$ or $1$. 
As $i$ increases
any number of consecutive ``shrinking'' $i$, \ie, $d_Z=\frac{1}{2}$, followed by an equal or greater number of expansions, \ie, $d_Z=2$, results in an interval whose closure contains $0$ or $1$.
Since $-j<M$ and $\Phi(-j,M,p)\geq0$
by \eqref{M} and \eqref{j},
it follows that
$\overline{Z^M}$ contains $0$ or $1$.
Then, from \eqref{M}
it follows that $\overline{Z^0}$
contains a rational number of the form  in \eqref{binary-end}.
\end{proof}

\subsection{Density of periodic points}\label{dense-per}
The following theorem establishes one part of Thm.~\ref{thm:main},
namely, 
(ii$^\prime$) in the definition of hetero-chaos 
in Sec.~\ref{sec:defheterochaos}.

\begin{theorem}\label{thm:2D_unstable_points} 
Each non-empty open subset of $\cube$ contains a 2-unstable hyperbolic periodic point 
and a 1-unstable hyperbolic periodic point of the 3D-HC map.
\end{theorem}
\begin{proof}
Let $U\subset[0,1]^3$ be an open set.
From Prop.~\ref{prop:dense-bias} and
Prop.~\ref{prop:InteriorBrick},
there exist a biased point $p\in U$, a biased pair $(j,k)$ for $p$
and a $(j,k)$-brick $Q^0$ contained in $U$.
By Prop.~\ref{prop:periodicpoint}, there exists a hyperbolic periodic point in the interior of $Q^0$ that is $2$-unstable for $F$.
Exchanging the roles of $F$ and $F^{-1}$ we obtain
a dense set of hyperbolic periodic points which are $2$-unstable for $F^{-1}$. 
Since these periodic points are $1$-unstable for $F$, the proof is complete.
\end{proof}

\subsection{The existence of a dense trajectory}
\label{sec:transitivity}

 The following theorem establishes 
(iii) in the definition of hetero-chaos 
in Sec.~\ref{sec:defheterochaos}, the remaining part of Thm.~\ref{thm:main}.
\begin{theorem}\label{thm:transitivity}~The 3D-HC map has a trajectory that is dense in $[0,1]^3$.
\end{theorem}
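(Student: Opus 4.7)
The plan is to first establish topological transitivity of $F$ on $\cube$ and then extract a dense forward orbit by a Baire-category argument. Let $\reg\subset\cube$ denote the set of regular points. By Prop.~\ref{prop:regular} every irrational point lies in $\reg$, so $\reg$ is a dense $G_\delta$, hence a Polish space in the subspace topology; on $\reg$ every iterate $F^n$ and $F^{-n}$ is continuous and single-valued, and $F$ is a bijection $\reg\to\reg$. Fix a countable basis $\{V_n\}_{n\in\N}$ for the topology of $\cube$, consisting of open boxes. The set of $p_0\in\reg$ whose forward orbit is dense in $\cube$ equals $\bigcap_n\bigcup_{k\ge 0}(F^{-k}(V_n)\cap\reg)$, and by Baire applied on $\reg$ it suffices to show that for every $n$ this union is open and dense in $\reg$. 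Openness is automatic from continuity of $F^{-k}$ on $\reg$, so what remains is the topological transitivity claim: \emph{for every pair of non-empty open sets $U,V\subset\cube$ there exists $k\ge 0$ with $F^k(U)\cap V\cap\reg\ne\emptyset$}.

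To prove transitivity, I would apply Prop.~\ref{prop:InteriorBrick} to choose an interior $(j,k)$-brick $B^0\subset U$ around a biased point (biased points being dense by Prop.~\ref{prop:dense-bias}). Its forward image $B^k:=F^k(B^0)$ is then an XZ pizzabox with $B^k_X=B^k_Z=[0,1]$ and $B^k_Y$ a non-trivial subinterval of $[0,1]$. Since $F^{k+m}(U)\supset F^m(B^k)$ for every $m\ge 0$, it is enough to find $m\ge 0$ with $B^k\cap F^{-m}(V)\ne\emptyset$ at a regular point.

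The key ingredient is that the $Y$-coordinate of $F^{-1}$ is governed by the one-dimensional map $\sigma\colon[0,1]\to[0,1]$ independently of $x$ and $z$ (see Fig.~\ref{fig:3H}(c)), and $\sigma$ is a full piecewise-linear Markov map whose three branches, of slopes $3/2$, $6$, $6$, each map onto $[0,1]$. Such a map is topologically exact: for every non-empty open $J\subset[0,1]$ there exists $m$ with $\sigma^m(J)=[0,1]$, which is a one-paragraph verification (any such $J$ contains a cylinder of the Markov partition, and a cylinder of depth $n$ maps onto $[0,1]$ after $n$ iterations of $\sigma$). Applied with $J=V_Y$, this produces some $m$ and an irrational $v_y\in V_Y$ with $\sigma^m(v_y)\in B^k_Y$. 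Choose also irrational $v_x\in V_X$ and $v_z\in V_Z$ and set $q:=(v_x,v_y,v_z)\in V\cap\reg$. Because $\pi_Y\circ F^{-1}=\sigma\circ\pi_Y$, the regular point $F^{-m}(q)$ has $Y$-coordinate $\sigma^m(v_y)\in B^k_Y$, while its $X$ and $Z$ coordinates lie in $[0,1]=B^k_X=B^k_Z$ automatically; hence $F^{-m}(q)\in B^k$ and $r:=F^{-(k+m)}(q)\in B^0\cap\reg\subset U\cap\reg$ satisfies $F^{k+m}(r)=q\in V\cap\reg$.

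The main obstacle I anticipate is managing the discontinuities of $F^{\pm 1}$ along the symbol-set boundaries, which is why the entire argument is confined to the invariant Polish subspace $\reg$. Since all affine pieces of $F$ and $F^{-1}$ have rational slopes and offsets, irrational initial data produce bi-infinite orbits entirely inside $\reg$, giving the unambiguous preimages the argument depends on. Once the topological exactness of $\sigma$ is granted, the combination of the pizzabox shape of $B^k$ (full X and Z extent) with the ability to steer the $Y$-coordinate of a preimage into the small interval $B^k_Y$ makes the transitivity statement essentially immediate, and Baire then delivers the dense trajectory.
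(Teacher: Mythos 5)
Your proof is correct, but it takes a genuinely different route from the paper's. The paper proves Thm.~\ref{thm:transitivity} constructively: it chains an infinite sequence of interior bricks together via the Two-Brick Lemma (Lemma~\ref{prop:2brick}), whose engine is the purely geometric fact that the XZ~pizzabox $F^{M}(U)$ must intersect the Y~breadbox $B^{-j}$ in a sub-brick; the nested breadboxes $U_1\supset U_2\supset\cdots$ then have a nonempty compact intersection, every point of which has a dense forward orbit. You instead prove topological transitivity and invoke Baire. Your transitivity argument uses only the forward half of the brick machinery (an interior brick $B^0\subset U$ whose image $B^k$ is a pizzabox, so that only the Y-coordinate constrains membership in $B^k$), and then exploits the semiconjugacy $\pi_Y\circ F^{-1}=\sigma\circ\pi_Y$ together with the topological exactness of the full-branch expanding map $\sigma$ to steer the Y-coordinate of a preimage of $V$ into $B^k_Y$. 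This is slicker and buys a residual set of points with dense orbit (which, to be fair, the paper also obtains a.e.\ from ergodicity in Cor.~\ref{lem:ergodic-transitive}); the paper's construction buys an explicit segment of such points and stays entirely within its elementary brick formalism. Two small points you should tighten, neither of which is a gap: first, rather than asserting that $\reg$ is a $G_\delta$, just run the Baire argument on the set of irrational points, which is Polish, $F$-invariant by Prop.~\ref{prop:regular}, and contained in $\reg$; second, the step $F^{-(k+m)}(q)\in B^0$ deserves one sentence --- the affine branch $F^k|_{B^0}$ surjects onto $B^k$ with rational coefficients, so the preimage of the irrational point $F^{-m}(q)$ inside $B^0$ is irrational and therefore coincides with the unique global preimage $F^{-k}\bigl(F^{-m}(q)\bigr)$.
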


\begin{proof} 
From Prop.~\ref{prop:dense-bias}
and Prop.~\ref{prop:InteriorBrick}, there exist a countable dense set of biased points $\{p_s\}_{s=0}^\infty$ $\subset(0,1)^3$, and 
for each $s\geq0$ a biased pair $(j_s,k_s)$ for $p_s$
and an interior $(j_s,k_s)$-brick $Q^0_s$ which contains $p_s$ and is contained in the open ball of radius $\frac{1}{(s+1)}$ about $p_s$.
We will construct a strictly increasing sequence $\{n_s\}_{s=0}^\infty$ in $\mathbb N$ and a decreasing sequence $\{U_s\}_{s=0}^\infty$ of $Y$ breadboxes
such that $\overline{U_{s+1}}\subset U_s$ and $F^{n_s}(U_s)\subset Q^0_s$ hold for every $s\geq0$.
 Then $\bigcap_{s=0}^\infty U_s=\bigcap_{s=0}^\infty \overline{U_s}$ is the intersection of a nested sequence of compact sets and so is non-empty.
 The forward trajectory of any point in this intersection intersects each of the $Q^0_s$ 
 and so is dense in $[0,1]^3$.

A key ingredient for our construction is the next lemma 
which allows us to connect two bricks.
\begin{lemma}
[Two-bricks lemma]
Let $j\geq0$, $k,m\geq1$ and let $U$ be an
interior $(0,m)$-brick and
let $Q^0$ be an interior $(j,k)$-brick. 
Then $Q=F^{m}(U)\cap F^{-j}(Q^0)$ is an $(m, j+k)$-brick and 
 $F^{-m}(Q)$ is an interior  $(0,j+k+m)$-brick with
$\overline{F^{-m}(Q)}\subset U$.
\label{prop:2brick}
\end{lemma}

We start the construction with $n_0=j_0$ and $U_0=F^{-j_0} (Q^0_1)$. Note that $U_0$ is an interior $(0,n_0+k_0)$-brick.
Given a $(0,n_s+k_s)$-brick $U_s$ for some $s\geq0$, define $n_{s+1}= n_s+j_{s+1}$ and 
$U_{s+1}=F^{-n_s-k_s}\left(F^{n_s+k_s}(U_s)\cap F^{-j_{s+1}}(Q_{s+1}^0)\right).$
By Lem.~\ref{prop:2brick}, $U_{s+1}$ is a $(0,n_{s+1}+k_{s+1})$-brick and satisfies
$\overline{U_{s+1}}\subset U_s$.
It follows from this construction that
$F^{n_s}(U_s)\subset Q^0_s$ holds for every $s\geq0$.

To prove Lem.~\ref{prop:2brick}, set $U'=F^{-m}(Q)$.
 Since $F^{m}(U)$ is an $XZ$ pizzabox and 
 $F^{-j}(Q^0)$ is a $Y$ breadbox, they intersect
each other.
Set $m'=j+k+m$. Notice that $F^{m'}(U')$ is an $XZ$ pizzabox,
and $U'$ is a $Y$ breadbox and satisfies $F^{m+j}(U')\subset Q^0$.
Since $U$ is a $(0,m)$-brick and $U'\subset U$, $F^n(U')$
is contained in a symbol set for $0\leq n\leq m-1$.
Similarly, 
since $Q^0$ is a $(j,k)$-brick
and
$F^{m}(U') \subset F^{-j}(Q^0)$,
$F^n(U')$ is contained in a symbol set for $m\leq n\leq m'-1$.
Hence, $U'$ is a $(0,m')$-brick. 
Since $U$ and $Q^0$ are interior bricks, it follows 
that $U'$ is an interior brick.
Hence 
$\overline{U'}\subset U$. This completes the proof of Lem.~\ref{prop:2brick} and that of Thm.~\ref{thm:transitivity}.
\end{proof}

\section{Ergodicity and further properties}\label{sec:ergodicity}
\ifincludeXX
\begin{figure}
\centering
\includegraphics[width=.75\textwidth]{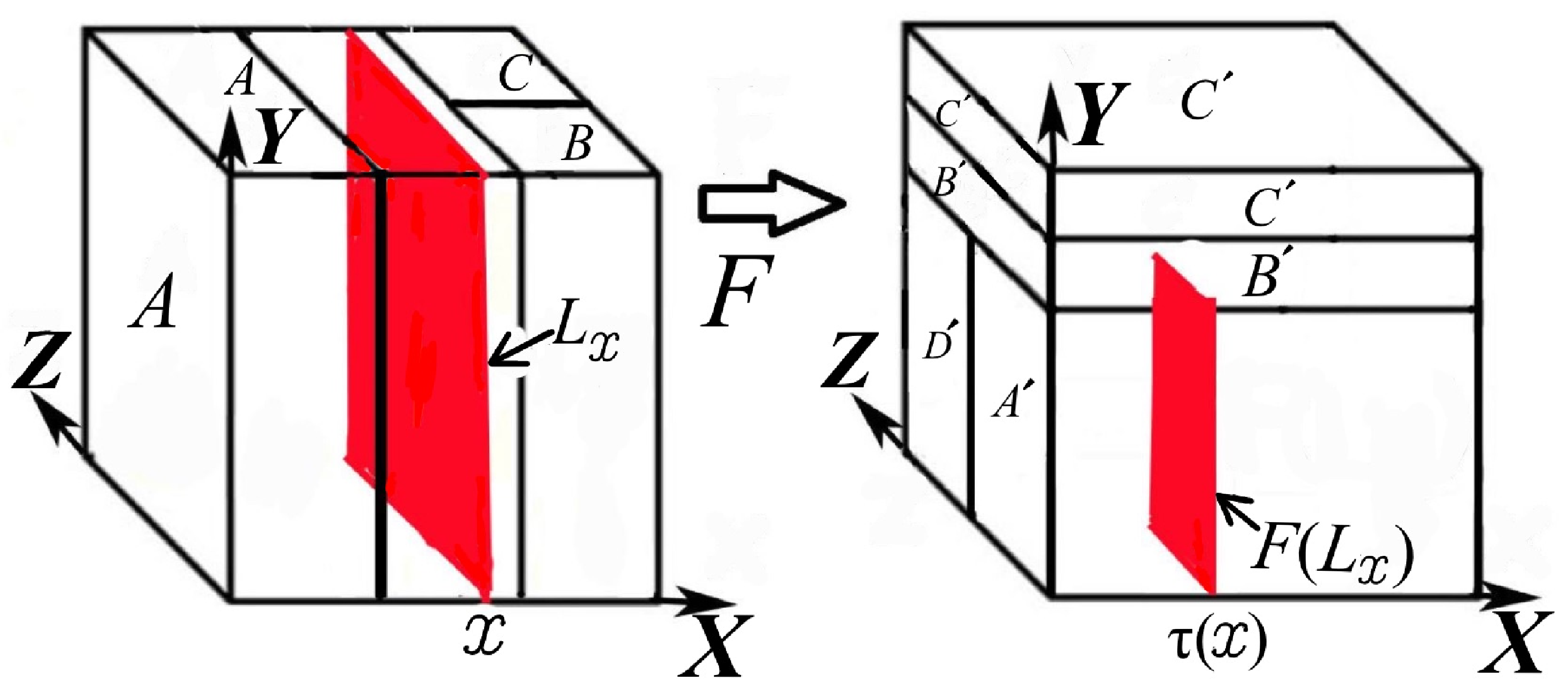}
\caption{
{\bf 
A Leaf and its image.}
The leaf $L_x$, $x\in[0,1]$ is shown on the left and its image  $F(L_x)\subset L_{\tau(x)}$ is shown on the right.}\label{fig:leaf}
\end{figure}
\fi 

This section is organized as follows. In Sec.~\ref{erg-pf} we prove Thm.~\ref{thm:ergodic0}.
In Sec.~\ref{LY} we prove
Thm.~\ref{li-yorke}.
In Sec.~\ref{LN} we compute Lyapunov numbers of typical points.

\subsection{Ergodicity}\label{erg-pf}
For each $x\in[0,1]$ we set
 \[L_x=\{x\}\times[0,1]^2,\]
 and call $L_x$ a {\bf leaf} through $x$, or 
 simply a leaf. See Fig.~\ref{fig:leaf}. Since
 the $Y$ direction is uniformly contracting everywhere for the 3D-HC map,
 the forward iterations of leaves are contained in thin strips.
The next proposition asserts that diameters of almost all leaves shrink to $0$ under forward iteration.

\begin{proposition}
[Asymptotic contraction on almost all leaves]
\label{prop:leaf}
Let $F$ be the 3D-HC map.
 For a.e. $x\in[0,1]$
and every $n\geq0$, there exist intervals $Y_n$, $Z_n$ such that $F^n(L_x)=\{\tau^n(x)\}\times Y_n\times Z_n$, and $|Y_n|\to0$, $|Z_n|\to0$ as $n\to\infty$.
\end{proposition}


\begin{proof}
For $x_0\in [0,1]$ and $n\geq0$
write 
$p_n=F^n((x_0,0,0))$.
Set $Y_0=[0,1]$ and $Z_0=[0,1]$.
By induction one can show that
for every $n\geq0$ there exist intervals $Y_n$, $Z_n$ such that $F^n(L_x)=\{\tau^n(x)\}\times Y_n\times Z_n$, and the following hold:

\begin{itemize}

\item[$\circ$] $|Z_n|=1$ or $|Z_n|\leq 1/2;$

\item[$\circ$] If $p_n\in R_1$, then $|Z_{n+1}|=|Z_n|/2;$

\item[$\circ$] If $p_n\in R_2$ and $|Z_n|=1$, then  $|Z_{n+1}|=|Z_{n}|=1$;

\item[$\circ$] If $p_n\in R_2$
and $|Z_n|\leq 1/2$, then
 $|Z_{n+1}|= 2|Z_n|$.
\end{itemize}
Since $F$ contracts $|Y_n|$ by a factor of $\frac{2}{3}$ or $\frac{1}{6}$,
$|Y_n|\to0$ as $n\to\infty$. 
Write $x_0$ as a base 3 number and denote by $b_k\in\{0,1,2\}$
 the $k$-th trinary digit. Then 
$p_{k-1}\in R_1$ for $b_k=0,1$ and 
$p_{k-1}\in R_2$ for $b_k=2$. This yields
\[|Z_{n}|\leq2^{-\#\{1\leq k\leq n\colon b_{k}\in \{0,1\}\}}\cdot 2^{\#\{1\leq k\leq n\colon b_k=2\}}\] for all $n\geq1$.
Since $\tau$ is ergodic,
 the digits $0,1,2$ appear equally likely for almost every $x_0$, and therefore $|Z_n|\to0$ as $n\to\infty$.
\end{proof}

\begin{proof}[Proof of Thm.~\ref{thm:ergodic0}]
Let $F$ be the 3D-HC map.
Let $\phi: \cube\to \mm R$ be a continuous function. 
Since $\tau$ is ergodic,
by Birkhoff's ergodic theorem,
$\frac{1}{n}\sum_{i=0}^{n-1}\phi((\tau^i(x),0,0))$ converges a.e. to a constant as $n\to\infty$.
Since $F^i((x,0,0))=(\tau^i(x),0,0)$,
this limit is equal to the trajectory average $\avphi((x,0,0))$ in \eqref{average}. 
To show that $\avphi$ is constant a.e.,
it is enough to show that
 for almost every $x\in[0,1]$,
$ \avphi$ is constant on the leaf $L_x$.
This is a consequence of Prop.~\ref{prop:leaf} and the uniform continuity of $\phi$.
Since $\phi$ is an arbitrary continuous function, the ergodicity of $F$ follows.

Let $F$ be the 2D-HC map and
let $\phi\colon[0,1]^2\to\mathbb R$ be an arbitrary continuous function.
Define $\psi\colon[0,1]^3\to\mathbb R$ by
$\psi(x,y,z)=\phi(x,z)$. 
From the ergodicity of the 3D-HC map, 
there exist a set $\Omega\subset[0,1]^3$ of full measure
and a constant $c\in\mathbb R$ such that
$\langle\psi\rangle(p)=c$
holds for all $p\in \Omega$.
Then $\avphi(\pi(p))=\langle\psi\rangle(p)=c$ holds for all $p\in \Omega$. 
Since $\pi(\Omega)$ is a set of full measure,
$\avphi$ is constant a.e.
Hence $F$ is ergodic. 
\end{proof}

\begin{corollary}
\label{lem:ergodic-transitive}
Let $F$ be the 2D-HC or the 3D-HC map.
For almost every point $p$ in the domain of definition of $F$,
$\{F^n(p)\}_{n\in\mathbb N}$ is dense in the domain.
\end{corollary}
\begin{proof}
This is a consequence of Thm.\ref{thm:ergodic0} and Birkhoff's ergodic theorem.
\end{proof}

\subsection{Almost every pair is a scrambled pair}\label{LY}
Thm~\ref{li-yorke} is a 
consequence of the ideas developed in Sec.~\ref{erg-pf}.
\begin{proof}[Proof of Thm.~\ref{li-yorke}]

The proof of Thm.~\ref{thm:ergodic0} used  leaves in $\cube$ 
and the ergodicity of $\tau$.
Here, we use leaves in $\cube\times\cube$ through $(x_1,x_2)$ of the form
$L_{x_1}\times L_{x_2}$,
and the ergodicity of  
  $\tau\times\tau$ relative to the Lebesgue measure on the product space.
Then, the conclusion of Thm.~\ref{li-yorke} follows from Birkhoff's ergodic theorem.
\end{proof}

\subsection{Lyapunov numbers}\label{LN}
The Lyapunov numbers of a point $p\in[0,1]^3$ are the geometric means of the diagonal elements of
the Jacobian matrix of $F$
along the trajectory. The Lyapunov number $\lambda_X$ in the $X$ direction is $3$,
and that 
in the $Y$ and $Z$ directions are (when they exist)
\begin{equation*}
 \lim_{n\to\infty} \Big(\prod_{i=0}^{n-1}d_Y(F^i(p))\Big)^{1/n}\mbox{ and }
 \lim_{n\to\infty} \Big(\prod_{i=0}^{n-1}d_Z(F^i(p))\Big)^{1/n}.
\end{equation*}
From the ergodicity of $\tau$ and the form of the map $F$,
 typical trajectories spend 
 $\frac{2}{3}$ of its iterates in $R_1$ and $\frac{1}{3}$ in $R_2$. 
The Lyapunov numbers 
of typical points are as follows: 
\begin{align*}
\lambda_X &= 
3;\\ 
\lambda_Y &=
\left(\frac{2}{3}\right)^{2/3}\times\left(\frac{1}{6}\right)^{1/3}=\left(\frac{2}{27}\right)^{1/3}\sim0.42;\\
\lambda_Z&=
\left(\frac{1}{2}\right)^{2/3}\times2^{1/3}=\left(\frac{1}{2}\right)^{1/3}\sim0.79.
\end{align*}
Since $F$ is volume preserving, the product of the three numbers is $1$. 

\subsection*{Acknowledgments} We thank the referees for very useful comments, 
and Suddhasattwa Das, Shin Kiriki, Naoya Sumi for fruitful discussions.
YS was supported by the JST PRESTO JPMJPR16E5 and the JSPS KAKENHI Grant No.17K05360 and 19KK0067. 
HT was supported by the JSPS KAKENHI Grant No.19KK0067, 19K21835 
and 20H01811.


\end{document}